\newcommand{\losemi}{{\otimes \kern -.78em \ltimes}}
\newcommand{\rosemi}{{\otimes \kern -.78em \rtimes}}
\newcommand{\Hom}{\ensuremath{\operatorname{Hom}}}
\newcommand{\End}{\ensuremath{\operatorname{End}}}
\newcommand{\0}{\bar 0}
\newcommand{\1}{\bar 1}
\newcommand{\Z}{\mathbb{Z}}
\newcommand{\C}{\mathbb{C}}
\newcommand{\gl}{\ensuremath{\mathfrak{gl}}}
\newcommand{\fg}{\ensuremath{\mathfrak{g}}}
\newcommand{\fp}{\ensuremath{\mathfrak{p}}}
\newcommand{\TT}{\mathbf{T}}
\renewcommand{\bar}{\overline}
\newcommand{\BB}{\mathcal{B}}
\newcommand{\leftdownJ}{\rotatebox[origin=c]{180}{\textup{\texttt{J}}}} 
\newcommand{\rightdownJ}{\reflectbox{\rotatebox[origin=c]{180}{\textup{\texttt{J}}}}} 
\newcommand{\leftupJ}{\reflectbox{\textup{\texttt{J}}}}
\newcommand{\rightupJ}{\mathtt{J}}
\newcommand{\Id}{\operatorname{Id}}
\newcommand{\ideal}{\mathcal{I}}
\newtheorem{theorem}{Theorem}[subsection]
\let\c@fact\c@theorem\makeatother
\let\c@note\c@theorem\makeatother
\newtheorem{lemma}{Lemma}[subsection]
\let\c@lemma\c@theorem\makeatother
\let\c@alg\c@theorem\makeatother
\let\c@remark\c@theorem\makeatother
\let\c@example\c@theorem\makeatother
\newtheorem{prop}{Proposition}[subsection]
\let\c@prop\c@theorem\makeatother
\let\c@conj\c@theorem\makeatother
\let\c@cor\c@theorem\makeatother
\newtheorem{defn}{Definition}[subsection]
\let\c@defn\c@theorem\makeatother
\numberwithin{equation}{subsection}
\crefname{theorem}{Theorem}{Theorems}
\crefname{fact}{Fact}{Facts}
\crefname{note}{Note}{Notes}
\crefname{lemma}{Lemma}{Lemmas}
\crefname{alg}{Algorithm}{Algorithms}
\crefname{remark}{Remark}{Remarks}
\crefname{example}{Example}{Examples}
\crefname{prop}{Proposition}{Propositions}
\crefname{conj}{Conjecture}{Conjectures}
\crefname{cor}{Corollary}{Corollaries}
\crefname{defn}{Definition}{Definitions}
\crefname{equation}{}{}
\begin{document}
\title{The Marked Brauer Category}

\author{Jonathan R. Kujawa}
\address{Department of Mathematics \\
          University of Oklahoma \\
          Norman, OK 73019}
\thanks{Research of the first author was partially supported by NSF grant
DMS-1160763}\
\email{kujawa@math.ou.edu}
\author{Benjiman C. Tharp}
\address{Department of Mathematics \\
          University of Oklahoma \\
          Norman, OK 73019}
\email{btharp@math.ou.edu}
\date{\today}
\subjclass[2000]{Primary 17B56, 17B10; Secondary 13A50}

\begin{abstract} We introduce the marked Brauer algebra and the marked Brauer category.  These generalize the analogous constructions for the ordinary Brauer algebra to the setting of a homogeneous bilinear form on a $\Z_{2}$-graded vector space.   We classify the simple modules of the marked Brauer algebra over any field of characteristic not two.  Under suitable assumptions we show that the marked Brauer algebra is in Schur-Weyl duality with the Lie superalgebra, $\fg$, of linear maps which leave the bilinear form invariant.  We also provide a classification of the indecomposable summands of the tensor powers of the natural representation for $\fg$ under those same assumptions.   In particular, our results generalize Moon's work on the Lie superalgebra of type $\mathfrak{p}(n)$ and provide a unifying conceptual explanation for his results. 
\end{abstract}

\maketitle

\section{Introduction}\label{} 

\subsection{}  Let $k$ be a field and let $V$ be an $n$-dimensional $k$-vector space.  We have a natural left action by $G=\operatorname{GL}(V)$ on the $r$-fold tensor product $V^{\otimes r}$ for any $r \geq 0$ (Where $V^{\otimes 0}$ is the trivial module by convention).  On the other hand the symmetric group on $r$ letters, $\Sigma_{r}$, has a right action on $V^{\otimes r}$ given by place permutation.  These two actions clearly commute with each other and so we have an algebra map 
\[
k\Sigma_{r} \to \End_{kG}\left(V^{\otimes r} \right).
\]  Schur proved for $k=\C$ that this map is surjective in general and injective whenever the dimension of $V$ is sufficently large \cite{Schur}.  Such mutually centralizing actions have been an active area of research ever since.

In particular, Brauer considered the case when $G$ is either the orthogonal or symplectic group \cite{Brauer}.  That is, the elements of $\operatorname{GL}(V)$ that leave invariant a nondegenerate bilinear form which is symmetric or antisymmetric, respectively.  For any $\delta \in k$ Brauer defined what is now known as the Brauer algebra, $B_{r}(\delta)$, via a diagrammatic basis.  He proved that if we set $\delta$ to be the dimension of $V$ (resp.\ the negative of the dimension of $V$) when the bilinear form is symmetric (resp.\ antisymmetric), we have a map
\begin{equation*}
B_{r}(\delta) \to \End_{kG}\left(V^{\otimes r} \right).
\end{equation*}

For at least twenty years Brauer's work has been known to admit a pleasing common generalization in the $\Z_{2}$-graded setting  (e.g.\ see \cite{Benkart, BLR}).   Namely, let $V=V_{\0} \oplus V_{\1}$ be a $\Z_{2}$-graded vector space with a nondegenerate, symmetric, even bilinear form on $V$ (see \cref{SS:LSAprelim} for the precise definition of these terms).  In the graded setting it is more convenient to work with Lie superalgebras. If we write $\fg$ for the Lie superalgebra of all endomorphisms of $V$ which leave the bilinear form invariant, then $\fg$ is the orthosymplectic Lie superalgebra $\mathfrak{osp}(V)$.  It is known that if we set $\delta$ equal to the superdimension of $V$ (i.e.\ the difference of the dimensions of $V_{\0}$ and $V_{\1}$), then again we have a map
\begin{equation*}
B_{r}(\delta) \to \End_{\fg}\left(V^{\otimes r} \right).
\end{equation*}

 If the dimension of $V_{\1}$ (resp.\ the dimension of $V_{\0}$) is zero, then $\fg$ is the orthogonal (resp.\ symplectic) Lie algebra.  Thus the $\Z_{2}$-graded setting provides a true unifying generalization of Brauer's work.  Moreover, the fact that $\delta$ is the superdimension provides a conceptual explanation for the negative dimension which appeared in Brauer's original work  

\subsection{}   We now assume $V$ is a $\Z_{2}$-graded vector space which admits a nondegenerate, symmetric \emph{odd} bilinear form.  One can again consider the Lie superalgebra, $\fg$, of all endomorphisms which leave the bilinear form invariant.  In this case $\fg$ is the Lie superalgebra $\fp (n)$ \cite{Kac}.  The representation theory of this Lie superalgebra is still poorly understood.  

It is natural to hope to define an analogue of the Brauer algebra in this case.  In \cite{Moon} Moon defined an algebra, $A_{r}$, by generators and relations and proved that there is a map
\[
A_{r} \to \End_{\mathfrak{p}(n)}\left(V^{\otimes r} \right)
\] and that this map is an isomorphism whenever $n\geq r$. Moon used Bergman's Diamond Lemma to prove that the dimension of $A_{r}$ is equal to the Brauer algebra $B_{r}(0)$ and observed that the generators and relations of $A_{r}$ bears a striking resemblance to those of $B_{r}(0)$.  However, he was unable to provide a conceptual explanation for these similarities. 

As an application of his construction Moon was able to use careful calculations in $A_{2}$ and $A_{3}$ to determine the indecomposable summands of $V^{\otimes 2}$ and $V^{\otimes 3}$, respectively.  However, studying an algebra given by generators and relations is not particularly easy and Moon's algebra has received little attention to date.  The representations of $\mathfrak{p}(n)$ are mysterious and an effective description of $A_{r}$ and its representation theory would be valuable tool.

\subsection{}  Our goal in this paper is to provide a diagrammatically defined algebra which provides a common generalization of the algebras of Brauer and Moon.  Our main results are as follows.  Fix a field $k$ of characteristic not two and fix $\varepsilon \in \{\pm 1 \}$.  If $\varepsilon=1$, then fix $\delta \in k$ and if $\varepsilon=-1$, then set $\delta=0$.  All of the following is done uniformly for all such $\varepsilon$ and $\delta$.  In \cref{SS:MarkedBrauerDiagrams} we define, for every $r,s \geq 0$ such that $r+s$ is even, the space of \emph{marked Brauer diagrams} $B_{r,s}(\delta, \varepsilon)$ spanned by natural generalizations of Brauer's original diagrams.  
 
In \cref{L:basis} we prove that $B_{r,s}(\delta, \varepsilon)$ has a basis consisting of certain marked Brauer diagrams which we call \emph{standard}.  In particular, this implies it has dimension $(r+s-1)!!$.  In \cref{SS:TwoOperations} we define a bilinear operation 
\[
B_{r,s}(\delta, \varepsilon) \otimes B_{s,t}(\delta, \varepsilon) \to B_{r,t}(\delta, \varepsilon)
\] given by the vertical stacking of diagrams.  This is a natural generalization of Brauer's original operation on diagrams.

It is convenient to package together the above data into the combinatorially defined \emph{marked Brauer category}, $\BB (\delta, \varepsilon)$. The objects in this category are the nonnegative integers, $\{[r] \mid r \in \Z_{\geq 0} \}$.  The morphisms are defined to be 
\[ 
\Hom_{\BB (\delta, \varepsilon)}([r],[s]) = B_{r,s}(\delta, \varepsilon)
\] and the composition of morphisms is given by the stacking operation described above.  This category is a strict $k$-linear tensor category and in \cref{T:generatorsandrelations} we provide a description of this category in terms of generators and relations. 

By definition the \emph{marked Brauer algebra} $B_{r}(\delta, \varepsilon)= B_{r,r}(\delta, \varepsilon)$ is the endomorphism algebra in this category of the object $[r]$. It is clear from the definition that $B_{r}(\delta, 1)$ is isomorphic to Brauer's original algebra $B_{r}(\delta)$.  When $\varepsilon=-1$, \cref{T:generatorsandrelations} provides a description of $B_{r}(0,-1)$ in terms of generators and relations and we immediately see that $B_{r}(0,-1)$ is isomorphic to Moon's algebra $A_{r}$.  In particular, we recover Moon's dimension formula as an easy corollary of the aforementioned basis theorem.  Furthermore the marked Brauer algebra provides a natural unifying setting which explains the similarities between $B_{r}(0,1)$ and $A_{r}\cong B_{r}(0,-1)$ observed by Moon.

In \cref{S:MarkedBrauerAlgebra} we initiate a study of the marked Brauer algebra using tools introduced by Koenig and Xi in their study of the ordinary Brauer algebra as a cellular algebra \cite{KX1, KX2}.  In particular, in \cref{T:inflation} we show that the marked Brauer algebra is an iterated inflation (without anti-involution) and use this in \cref{T:simplesoftheMBA} to classify the simple modules of the marked Brauer algebra over any field of characteristic not equal to two.  The main result of \cite{KX1} is that an algebra is cellular if and only if it is an iterated inflation of the ground field with a compatible involutive anti-homomorphism.   

A natural question at this stage is if the marked Brauer algebra is also a cellular algebra.  The marked Brauer algebra admits two involutive anti-homomorphisms: one which correspond to the rotation of diagrams by 180 degrees and one which corresponds to reflecting across the horizontal axis.   However when $\varepsilon=-1$ these maps are not compatible with the iterated inflation construction and, indeed, in this case the marked Brauer algebra is not a cellular algebra.  However, it still satisfies a graded version of the Koenig-Xi condition and we expect that many of the techniques of cellular algebras should generalize.

In \cref{S:LSAs} we come full circle and relate the marked Brauer algebra to representations of Lie superalgebras.  Let $V=V_{\0}\oplus V_{\1}$ be a $\Z_{2}$-graded vector space and set $\delta = \dim V_{\0} - \dim V_{\1}$.  Let $(-,-)$ be a nondegenerate symmetric bilinear form which is homogeneous and of degree $\bar {b} \in \Z_{2}$.  Set $\varepsilon = (-1)^{\bar{b}}$.  Let $\fg$ be the Lie subsuperalgebra of $\mathfrak{gl}(V)$ consisting of all linear maps which leave the bilinear form invariant.  Set $T_{\fg}(V)$ to be the full subcategory of $\fg$-modules consisting of objects $V^{\otimes r}$ for $r \geq 0$.

Using the presentation of the marked Brauer category given in \cref{T:generatorsandrelations} we obtain functors of strict $k$-linear tensor categories 
\begin{align*}
F: &\BB(\delta, \varepsilon) \to T_{\fg}(V).
\end{align*}  This functor naturally extends to the Karoubi envelope of the additive envelope of these categories. In the case when $k=\C$ this functor is known to be full and faithful on objects whenever the dimension of $V$ is sufficiently large compared to the tensor power.  In particular, our classification of the simple modules for the marked Brauer algebra immediately provides a complete classification of the indecomposable summands of $V^{\otimes r}$ under these conditions.  As a corollary we recover Moon's classification results for $V^{\otimes 2}$ and $V^{\otimes 3}$.

\subsection{}
 
The results of this paper raise a number of questions.  The ordinary Brauer algebra has a rich representation theory (e.g.\ \cite{CD,CDM, CDM2, Li}) and we expect an equally interesting theory for the marked Brauer algebra.  As we mentioned in the previous section, the marked Brauer algebra is no longer a cellular algebra when $\varepsilon=-1$ but it does admit an involutive anti-homomorphism which satisfies a graded version of the cellular condition of \cite{KX1}.  This raises the natural question of developing a theory of algebras which are graded cellular (as opposed to cellular algebras which are graded).  We expect that many of the results for cellular algebras would carry over to this more general setting and would allow one to consider cell modules, etc.\ for the marked Brauer algebra.

In a somewhat different direction Comes-Wilson \cite{CW} study a walled Brauer algebra analogue of the marked Brauer category first introduced by Deligne \cite{Del} and use it to provide a complete description of the indecomposable summands of the mixed tensor spaces for the Lie superalgebra $\mathfrak{gl}(m|n)$.  Similar results along with a grading, Koszulity, and a relation to a certain generalization of Khovanov's arc algebra are obtained by Brundan-Stroppel \cite{BS}.  It would be interesting to develop similar results for the marked Brauer category.  In particular, when $\varepsilon=-1$ we expect to obtain new information about the poorly understood representation theory of the Lie superalgebra $\mathfrak{p}(n)$.  Also noteworthy is the recent work by Brundan-Comes-Nash-Reynolds and Brundan-Reynolds on using cyclotomic quotients of the affine walled Brauer category to obtain certain tensor product categorifications of $\mathfrak{sl}_{\infty}$-modules \cite{Brundan, BCNR, BR}.  It would be interesting to investigate similar questions for the marked Brauer category.

As we discussed above, over any field of odd characteristic there is a functor between the marked Brauer category and the category of tensor powers of the natural representation for the Lie superalgebra $\fg$.  Even over $\mathbb{C}$ it is not fully settled when this functor is faithful and full; that is, when the algebra homomorphism from the marked Brauer algebra to the endomorphisms of a tensor power is injective and surjective.  It is also interesting to describe the kernel and image of this map as in \cite{LZ0}.   This is equivalent to obtaining the First and Second Fundamental Theorems of Invariant Theory in this setting.  Another natural question is if there exists a marked version of the Birman-Murakami-Wenzl algebra.  Preliminary results of Grantcharov and Guay suggest the existence of a quantum group corresponding to the Lie superalgebra $\mathfrak{p}(n)$ \cite{GG} and it is natural to expect a deformation of the marked Brauer algebra which plays the role of the BMW algebra in this setting.  See \cite{BGJKW, JK} for similar results for $\mathfrak{q}(n)$. 

Finally, we would like to mention that during the writeup of this work we learned of recent work by Serganova in which she also provides a diagrammatic description of Moon's algebra $A_{r}$ \cite{Ser, SerPC}.  It would be interesting to compare her results to ours.

\subsection{Acknowledgments}  It is our pleasure to acknowledge helpful conversations with Georgia Benkart, Dimitar Grantcharov, Nicolas Guay, Catharina Stroppel, and Vera Serganova at various stages of this project.

\section{Marked Brauer Diagrams}\label{S:MarkedBrauerDiagrams}

\subsection{}\label{SS:prelims}  Let $k$ be a field with characteristic different from two.  Unless otherwise stated, all vector spaces considered in this paper will be finite dimensional $k$-vector spaces.  A \emph{superspace} is a $\Z_{2}$-graded vector space, $V= V_{\0} \oplus V_{\1}$.  Given a superspace $V$ and a homogenous element $v \in V$, we write $\bar{v} \in \Z_{2}$ for its degree.  For short we call an element of $V$ \emph{even} (respectively, \emph{odd}) if $\bar{v}=\0$ (respectively, $\bar{v}=\1$).  We view $k$ itself as a superspace concentrated in degree $\0$.  Given a superspace $V$ we say the \emph{dimension} of $V$ is $m|n$ to record that the dimension of $V_{\0}$ is $m$ and the dimension of $V_{\1}$ is $n$.  In particular, the dimension of $V$ as a vector space is $m+n$ and the \emph{superdimension} of $V$ is, by definition, $m-n$.

If $V$ and $W$ are superspaces, then $V \otimes W$ is naturally a superspace where we grade pure tensors by the formula $\bar{{v\otimes w}} = \bar{v} + \bar{w}$ for all homogenous $v\in V$ and $w \in W$.  Similarly, the space of $k$-linear maps, $\Hom_{k}(V,W)$ is naturally $\Z_{2}$-graded by declaring that a linear map $f:V \to W$ is of degree $r \in \Z_{2}$ if $f(V_{s}) \subseteq W_{r+s}$ for all $s \in \Z_{2}$.

\subsection{}\label{SS:MarkedBrauerDiagrams}  Fix $r,s \in \Z_{\geq 0}$, $\varepsilon \in \left\{\pm 1 \right\}$, and $\delta \in k$ with $\delta=0$ if $\varepsilon=-1$.   A \emph{Brauer diagram} for the pair $(r,s)$ is a partition of the set $\{1,2 \dotsc , r, 1', 2' \dotsc , s' \}$ into two-element subsets.  We follow Brauer's original convention in \cite{Brauer} of representing a Brauer diagram pictorially as a graph in the plane as follows.  Given a Brauer diagram we draw $r$ vertices labelled by $1, \dotsc , r$ in a horizontal row along the $y=1$ line, $s$ vertices labelled by $1', \dotsc , s'$ in a horizontal row along the $y=0$ line, and an edge without self-intersections connecting each pair of vertices which appear in the same two element subset.  To avoid clutter we usually leave the labeling of the vertices implicit.  We consider two Brauer diagrams equivalent if they represent the same set partition.  That is, if their corresponding pictures can be obtained from one another by a regular isotopy in the plane. 

Let us now introduce terminology for Brauer diagrams which will be useful in what follows.  We call an edge which connects two vertices in the top row a \emph{cup}, an edge which connects two vertices in the bottom row a \emph{cap}, and an edge which connects a vertex in the top row to a vertex in the bottom row a \emph{through string}.  We call an imaginary horizontal line through a Brauer diagram a \emph{line of latitude}.

A \emph{marked Brauer diagram} is a Brauer diagram which has been decorated according to the following rules:

\begin{itemize}
\item each cup is decorated with a diamond which we call a \emph{bead};
\item each cap is decorated with an \emph{arrow}; 
\item if we use the word \emph{marking} to refer to either a bead or an arrow, then we require that markings never lie on the same line of latitude.
\end{itemize}
The following picture is an example of a marked Brauer diagram:

\begin{figure}[h]
\centering
\includegraphics{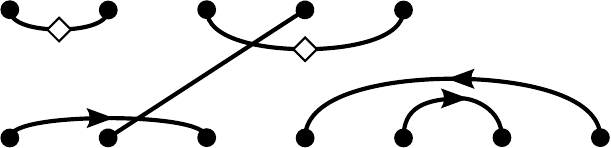}.
\end{figure}

We consider two marked Brauer diagrams equivalent if their corresponding graphs can be obtained from one another by a regular isotopy which never puts two markings simultaneously onto the same line of latitude.  We call two markings \emph{adjacent} if there are no markings which lie on a line of latitude strictly between them.    

We can now introduce the space of marked Brauer diagrams.  In what follows, if a marking is allowed to be either an arrow or bead, then we denote it by a star and we use the convention that different stars in the same diagram may denote different kinds of markings.

\begin{defn}\label{D:markedBrauerDiagrams} Fix $r,s \in \Z_{\geq 0}$, $\varepsilon \in \left\{\pm 1 \right\}$, and $\delta \in k$ with $\delta=0$ if $\varepsilon=-1$.   Define $B_{r,s}(\delta, \varepsilon)$ to be the vector space spanned by the marked Brauer diagrams subject to the following local relations:

\begin{figure}[h]
\centering
\includegraphics[scale=1]{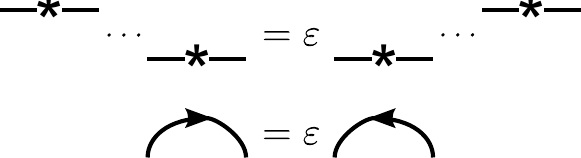}.
\end{figure}
\end{defn}

The first relation indicates that an $\varepsilon$ is introduced whenever two adjacent markings trade lines of latitude within a possibly larger marked Brauer diagram which is otherwise unchanged.  Similarly, the second relation indicates that an $\varepsilon$ is introduced whenever an arrow switches direction within a  possibly larger marked Brauer diagram which is otherwise unchanged.

Observe that when $r+s$ is an odd integer there are no marked Brauer diagrams. In this case we declare $B_{r,s}(\delta, \varepsilon) =0$.  We also note that if $\varepsilon=1$, then we have a vector space isomorphism given by forgetting the markings from $B_{r,s}(\delta, 1)$ to the space of ordinary Brauer diagrams, $B_{r,s}(\delta)$,  studied, for example, in \cite{LZ1}.

\begin{defn}\label{R:grading}  We define a $\Z_{2}$-grading on $B_{r,s}(\delta, \varepsilon)$ by setting the degree of a marked Brauer diagram, $D$, as follows.  If $\varepsilon=1$, then we declare $D$ to always be even.  If $\varepsilon=-1$, then we declare the degree of $D$ to be the number of markings on $D$ considered modulo two. 
\end{defn}

\subsection{}
We put a total order on the markings of a marked Brauer diagram by putting a total order on the cups and caps as follows.  We declare all cups to be before all caps in the ordering.  We order the cups in increasing order by their left vertices as you go left to right.  We order the caps in decreasing order by their left vertices as you go left to right.  The total order on the markings in a marked Brauer diagram is then given by ordering them according to the cup or cap on which they appear.  We call a marked Brauer diagram \emph{standard} if every arrow points to the right and if every marking in the diagram is below all other markings which occur earlier in the total ordering.  An example of a standard marked Brauer diagram is given below.
 
\begin{figure}[h]
\centering
\includegraphics[scale=1]{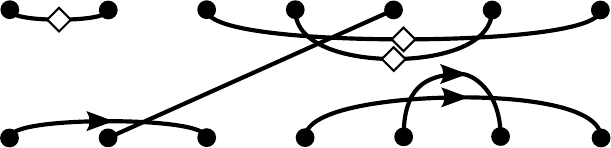}
\end{figure}

\begin{theorem}\label{L:basis}  The set of standard marked Brauer diagrams provides a homogeneous basis for $B_{r,s}(\delta, \varepsilon)$.  In particular, if we set $n=r+s$, then the dimension of $B_{r,s}(\delta, \varepsilon)$ is $\frac{(2n)!}{2^{n}n!}=(2n-1)!!$.

\end{theorem}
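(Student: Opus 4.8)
The plan is to prove the basis statement in three parts---that the standard diagrams span $B_{r,s}(\delta,\varepsilon)$, that they are linearly independent, and that there are the asserted number of them---treating $\varepsilon=\pm 1$ uniformly and exploiting that $\varepsilon^{-1}=\varepsilon$.

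\emph{Spanning.} Given an arbitrary marked Brauer diagram $D$, I would first apply the second local relation to each cap to make every arrow point right; each flip rewrites $D$ as $\varepsilon$ times a diagram with one fewer left-pointing arrow. Reading the markings in the order in which they occur along lines of latitude, I would then use the first local relation to interchange height-adjacent markings, sorting this vertical order into the standard total order by a sequence of adjacent transpositions (a bubble sort), each interchange contributing a factor of $\varepsilon$. Once all arrows point right and the vertical order of the markings agrees with the total order, the isotopy class is determined by the underlying matching, so this process rewrites $D$ as $\pm$ the standard diagram on the same underlying matching. Hence the standard diagrams span.

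\emph{Counting.} Since a standard diagram is pinned down by its underlying matching, sending a standard diagram to its underlying Brauer diagram (set partition of $\{1,\dots,r,1',\dots,s'\}$ into pairs) is a bijection onto the set of all Brauer diagrams for $(r,s)$. The number of perfect matchings of the $r+s$ points is $(r+s-1)!!$ (and $0$ when $r+s$ is odd), which gives the asserted dimension. Moreover each standard diagram is homogeneous, its degree being the parity of its number of markings, so the basis is homogeneous.

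For \emph{linear independence}, the heart of the argument, I would construct a target space $W$ with basis $\{e_m\}$ indexed by the Brauer diagrams $m$ for $(r,s)$, and define a sign on marked diagrams by $\sigma(D)=\varepsilon^{\ell(D)+\mathrm{inv}(D)}$, where $\ell(D)$ counts left-pointing arrows and $\mathrm{inv}(D)$ counts pairs of markings whose vertical order disagrees with the standard total order. Setting $\Phi(D)=\sigma(D)\,e_{m(D)}$, with $m(D)$ the underlying matching, I must check that $\Phi$ is constant on regular-isotopy classes and respects the two local relations, so that it descends to a linear map $\Phi\colon B_{r,s}(\delta,\varepsilon)\to W$. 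Isotopy invariance holds because the equivalence forbids two markings from ever sharing a line of latitude, so the total vertical order of the markings and the direction of each arrow are isotopy invariants, and hence so are $\ell$ and $\mathrm{inv}$. Compatibility with the relations again uses $\varepsilon^{-1}=\varepsilon$: swapping two height-adjacent markings changes $\mathrm{inv}$ by exactly one and multiplies $\Phi$ by $\varepsilon$, matching the first relation, while flipping one arrow changes $\ell$ by exactly one and multiplies $\Phi$ by $\varepsilon$, matching the second. A standard diagram has $\ell=\mathrm{inv}=0$, so $\Phi$ sends it to $e_m$; distinct standard diagrams have distinct underlying matchings and hence distinct, linearly independent images, forcing the standard diagrams to be linearly independent in $B_{r,s}(\delta,\varepsilon)$.

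\emph{Main obstacle.} The one genuinely delicate point is the well-definedness of $\sigma$, that is, verifying that $\Phi$ respects simultaneously the regular-isotopy equivalence and \emph{both} local relations. Everything turns on the interaction of two features of the setup: that $\varepsilon=\pm1$, so signs are their own inverses and each relation multiplies by $\varepsilon$ regardless of which way it is applied, and the defining prohibition against two markings lying on the same line of latitude, which is exactly what makes the vertical order a genuine total order and an isotopy invariant. Once $\Phi$ is established the independence is immediate, and both the spanning reduction and the matching count are routine.
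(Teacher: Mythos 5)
Your proposal is correct and follows essentially the same route as the paper: your sign $\varepsilon^{\ell(D)+\mathrm{inv}(D)}$ is exactly the paper's statistic $d(D)$ (left-pointing arrows plus marking inversions), and your map $\Phi$ is precisely the paper's well-defined map $D\mapsto\varepsilon^{d(D)}\tilde{D}$ onto the span of ordinary Brauer diagrams, with your bubble-sort spanning argument and matching count likewise matching the paper's (more tersely stated) reasoning. The only cosmetic difference is that you phrase independence via distinct images in an abstract space $W$ rather than via surjectivity onto $B_{r,s}(\delta)$ of known dimension, and you spell out the well-definedness check the paper leaves implicit.
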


\begin{proof} The relations given in \cref{D:markedBrauerDiagrams} imply that every marked Brauer diagram in $B_{r,s}(\delta, \varepsilon)$ is a standard diagram up to scaling by a power of $\varepsilon$ and, hence, spanning is clear.  It is straightforward to count the standard marked Brauer diagrams and see that it is the claimed number. 

The space of ordinary Brauer diagrams, $B_{r,s}(\delta)$, is well known to have the desired dimension so it suffices to define a surjective vector space map from $B_{r,s}(\delta, \varepsilon)$ to $B_{r,s}(\delta)$. We do this as follows. Given a marking, $m$, on a marked Brauer diagram $D$, let $d(m)$ be the number of markings on $D$ which are strictly before $m$ in the ordering and are below $m$.  Given a marked Brauer diagram $D$ we define 
\[
d(D)= (\text{number of left pointing arrows in $D$})+\sum_{m} d(m),
\]
where the sum is over all markings in $D$.  For example, note that $D$ is standard if and only if $d(D)=0$. We then have a well defined surjective linear map $B_{r,s}(\delta, \varepsilon) \to B_{r,s}(\delta)$ defined on marked Brauer diagrams by $D \mapsto \varepsilon^{d(D)}\tilde{D}$, where $\tilde{D}$ is the unmarked Brauer diagram obtained from $D$ by forgetting the markings.
\end{proof}

\subsection{}\label{SS:TwoOperations}

There are two combinatorially defined operations on marked Brauer diagrams.  The easier to define is given by horizontal concatenation of diagrams with the general rule given by linearity.  It will be immediate from the definition that it is associative and preserves the $\Z_{2}$-grading.  For nonnegative integers $r,s,t,u$ the \emph{tensor product} is the bilinear map 
\[
B_{r,s}(\delta, \varepsilon) \otimes B_{t,u}(\delta, \varepsilon) \to B_{r+t, s+u}(\delta, \varepsilon),
\] 
\[
(D_{1},  D_{2}) \mapsto D_{1} \otimes D_{2},
\] where $D_{1} \otimes D_{2}$ is the marked Brauer diagram obtained by concatenating $D_{1}$ to the left of $D_{2}$ and where all markings of $D_{1}$ are taken to be above all markings in $D_{2}$. Pictorially, we can represent $D_{1} \otimes D_{2}$ as:

\begin{figure}[h]
\centering
\includegraphics{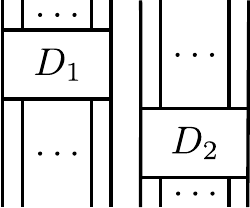}.
\end{figure}

The second operation is given by vertical concatenation of diagrams with the general rule given by linearity.  It is again straightforward to see that this operation is associative and preserves the $\Z_{2}$-grading.   For nonnegative integers $r,s,t$ \emph{composition} is the linear map, 
\[
B_{r,s}(\delta, \varepsilon) \times B_{s,t}(\delta, \varepsilon) \to B_{r,t}(\delta, \varepsilon),
\]
\[
(D_{1},  D_{2}) \mapsto D_{1} \circ D_{2}.
\]  We define composition on diagrams by vertically concatenating $D_{1}$ above $D_{2}$ and identifying the bottom row of vertices of $D_{1}$ with the top row of vertices of $D_{2}$.  A marked Brauer diagram is obtained from this diagram by simplifying using the following two rules:
\begin{enumerate}
\item If the concatenation of the two diagrams creates $m$ closed loops, then the loops are deleted and the resulting diagram is scaled by $\delta^{m}$.
\item If adjacent markings appear on the same edge, then they should be cancelled in pairs according the following local cancellation rules:

\begin{figure}[h]
\centering
\includegraphics{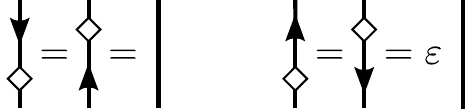}.
\end{figure}
\end{enumerate}
See Figure~1 for an illustrative example of the composition of two marked Brauer diagrams. Note that for composition to be associative when $\varepsilon=-1$ we must have $\delta=0$.  This can already be seen in $B_{2,2}(\delta, -1)$.

\begin{figure}[h]
\centering
\includegraphics[scale=.95]{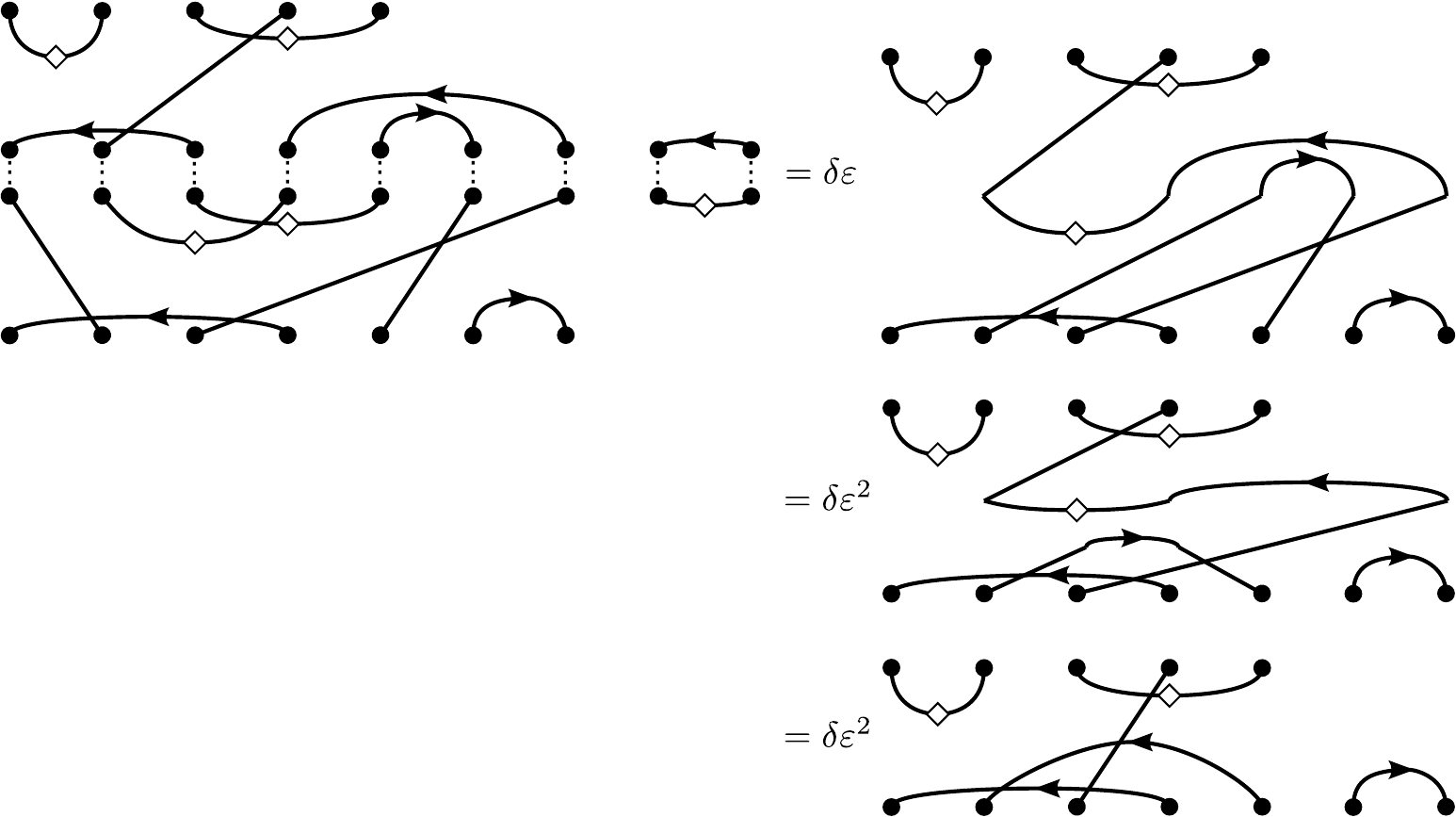}
\caption{An example of the composition operation.}
\end{figure}

\section{The Marked Brauer Category}\label{S:MarkedBrauerCategory}
\subsection{}  
As with tangles and ordinary Brauer diagrams \cite{Kassel,LZ1, Turaev}, it is convenient to describe the structure of marked Brauer diagrams using the language of categories. 

\begin{defn}\label{D:MarkedBrauerCat}   We define the \emph{marked Brauer category} $\BB(\delta, \varepsilon)$ to be the category with objects 
\[
\left\{[a] \mid  a=0,1,2,3,\dotsc \right\}
\] and morphisms 
\[
\Hom_{\BB (\delta, \varepsilon)}([a],[b]) : = B_{a,b}(\delta, \varepsilon).
\]  The composition of morphisms is given by the composition operation defined in \cref{SS:TwoOperations}. 

\end{defn}
From \cref{R:grading} we obtain a natural $\Z_{2}$-grading on each $\Hom$-space in $\BB(\delta, \varepsilon)$. The interested reader can easily check that $\BB(\delta, \varepsilon)$ is enriched over finite-dimensional $k$-superspaces.  We also observe that $\BB (\delta, \varepsilon)$ is a strict tensor category in the sense of \cite{Kassel}.  The tensor product bi-functor is defined on objects by $[a]\otimes[b] = [a+b]$ and on morphisms via the tensor product operation introduced in \cref{SS:TwoOperations}.  Furthermore it is a symmetric tensor category with braiding $\beta_{a,b}:[a] \otimes [b]\to [b] \otimes [a]$ given by the following diagram:

\begin{figure}[h]
\centering
\includegraphics[scale=1]{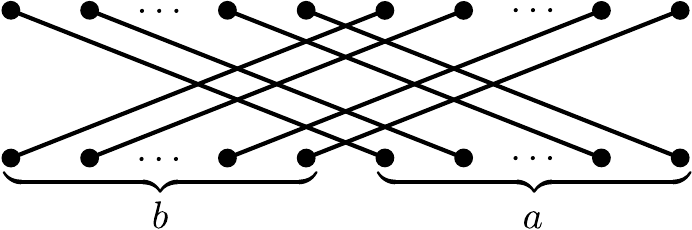}.
\end{figure}

\subsection{}\label{SS:generatorsandrelations}

The marked Brauer category has a presentation by generators and relations as a strict $k$-linear tensor category.   See \cite{Kassel, Turaev} for similar results for the tangle category and \cite{LZ1} for the ordinary Brauer category.

\begin{theorem}\label{T:generatorsandrelations}  Let $I, X, \cup$, and $\cap$ denote, respectively, the marked Brauer diagrams: 

\begin{figure}[h]
\centering
\includegraphics[scale=1]{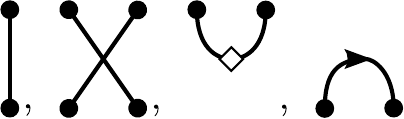}.
\end{figure}

The category $\BB(\delta, \varepsilon)$ is generated as a strict $k$-linear tensor category by $I,X, \cup$, and $\cap$ subject to the following relations:

\[
\begin{aligned}[c]
I \circ I &= I \\
I \otimes I \circ X  &= X = X \circ I \otimes I \\
 X \circ X &= I \otimes I \\
\cap \circ X &= \varepsilon \cap \\
\cap \otimes I \circ I \otimes \cup &= I\\
\cap \otimes I\circ I \otimes X &= I \otimes \cap  \circ X \otimes I \\
I \otimes I \otimes \cup \circ \cup &= \varepsilon\left( \cup \otimes I \otimes I \circ \cup\right) \\
\cap \otimes I \otimes I \circ I \otimes I \otimes \cup &= \varepsilon\left(\cup \circ \cap \right)
\end{aligned}
\hspace{.25in}
\begin{aligned}[c]
I \otimes I \circ \cup &= \cup \\
\cap \circ I \otimes I &= \cap \\
 X  \circ \cup &= \cup \\
\cap \circ \cup &= \delta\\
I \otimes \cap \circ \cup \otimes I &= \varepsilon I\\
 I \otimes X \circ \cup \otimes I &= X \otimes I  \circ I \otimes \cup \\
\cap \circ I \otimes I \otimes \cap &= \varepsilon\left(\cap \circ \cap \otimes I \otimes I\right) \\
X \otimes I \circ I \otimes X \circ X \otimes I &= I \otimes X \circ X \otimes I \circ I \otimes X 
\end{aligned}
\]

Furthermore, by declaring $I$ and $X$ to be even and $\cup$ and $\cap$ to be odd, we have a $\Z_{2}$-grading on the $\Hom$-spaces which coincides with the one given in \cref{R:grading}.
\end{theorem}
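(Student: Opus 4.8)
The plan is to prove this presentation in two movements: first show the four diagrams $I, X, \cup, \cap$ generate $\BB(\delta,\varepsilon)$ as a strict $k$-linear tensor category and that the listed relations hold, then show these relations suffice to recover all of $\BB(\delta,\varepsilon)$ (so no relations are missing). For generation, I would argue that any marked Brauer diagram can be written as a tensor-composite of the four generators. A standard diagram (in the sense preceding \cref{L:basis}) is determined by its underlying set partition together with the placement of beads and arrows, so the first task is to realize an arbitrary underlying Brauer diagram from $I,X,\cup,\cap$. This is exactly the generation statement for the \emph{ordinary} Brauer category in \cite{LZ1}: every partition of $\{1,\dots,r,1',\dots,s'\}$ into pairs is built by tensoring identities, cups, and caps and conjugating by a permutation, and every permutation is a product of adjacent transpositions $I^{\otimes a}\otimes X\otimes I^{\otimes b}$. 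The genuinely new content is bookkeeping for the markings: since $\cup$ and $\cap$ carry a single bead/arrow by definition, I must check that the markings appearing in any diagram can be produced and then moved into standard position using the relations. Concretely, $\cup$ contributes a bead and $\cap$ a rightward arrow, and the relations $\cap\circ X=\varepsilon\cap$ (arrow reversal) and the two-bead exchange relations let me slide and reverse markings at the cost of signs, so I can reach any standard diagram.

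For the converse — that the relations are complete — I would use a dimension/basis comparison driven by \cref{L:basis}. Let $\widehat{\BB}$ be the abstract strict $k$-linear tensor category presented by the stated generators and relations, with the canonical full tensor functor $\Phi\colon \widehat{\BB}\to\BB(\delta,\varepsilon)$ sending each generator to the corresponding diagram. By the generation step $\Phi$ is full (surjective on each $\Hom$-space), so it suffices to prove that $\dim_k\Hom_{\widehat{\BB}}([r],[s])\le (2n-1)!!$ with $n=r+s$, since \cref{L:basis} gives that value as the target dimension and a surjection between spaces of equal finite dimension is an isomorphism. To bound the source, I would show the relations let every morphism in $\widehat{\BB}([r],[s])$ be rewritten as a $k$-linear combination of images of standard diagrams. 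This is a normal-form argument: using the three relations in the left column that say $I$ is an identity and the braid relation, together with $X\circ X=I\otimes I$, one straightens permutation parts; the relations $\cap\circ\cup=\delta$ and loop removal handle closed loops; the ``zig-zag'' relations $\cap\otimes I\circ I\otimes\cup=I$ and $I\otimes\cap\circ\cup\otimes I=\varepsilon I$ (the snake identities) let one pull cups above caps and remove spurious $U$-turns; and the bead-exchange and arrow-reversal relations put all markings into the standard below-everything-earlier configuration. Each standard configuration then indexes at most one basis morphism, yielding the required count.

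The main obstacle is the completeness half, specifically verifying that the eight relations on each side are \emph{enough} to reach the normal form — that is, that no further independent relation is forced by diagram equivalence. I expect this to require a careful confluent rewriting system: one must check that the two ways of moving a marking past a crossing, or of resolving a configuration with several cups and caps, agree (the relevant diamond/confluence checks), and that the degree-tracking of signs is consistent, so that the $\varepsilon$-powers predicted by the relations match those appearing in \cref{L:basis} via the exponent $d(D)$. The snake relations and their sign-twisted analogues (the four relations coupling $\cup$ and $\cap$ across two strands) are where the $\varepsilon$'s enter most delicately, and reconciling them with associativity of composition — which by the remark following the composition operation forces $\delta=0$ when $\varepsilon=-1$ — is the crux; once the rewriting terminates at standard diagrams the dimension count closes the argument. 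The final $\Z_2$-grading claim is then immediate: assigning $I,X$ degree $\0$ and $\cup,\cap$ degree $\1$ is consistent with every relation (each side has the same parity, as the bead/arrow counts match modulo two), and it manifestly agrees with \cref{R:grading} on standard diagrams, so $\Phi$ is a functor of $\Z_2$-graded categories.
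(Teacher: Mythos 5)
Your proposal is correct and follows essentially the same route as the paper: generation plus picture-verification of the relations, with completeness reduced to a normal-form/spanning argument whose count is matched against the standard-diagram basis of \cref{L:basis}. The only difference is presentational — the paper compresses exactly this rewriting-and-dimension-count step into the observation that the proof of Theorem~2.6 of Lehrer--Zhang \cite{LZ1} applies verbatim once the powers of $\varepsilon$ are tracked, which is the confluence bookkeeping you correctly identify as the crux.
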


\begin{proof}  It is not difficult to see that the given elements generate; that is, that every morphism is a linear combination of elements obtained by the various possible compositions and tensor products of $I$, $X$, $\cup$, and $\cap$.  It is also not difficult to verify that the listed relations are satisfied in $\BB (\delta, \varepsilon)$ by drawing the corresponding pictures and using the rules given in \cref{S:MarkedBrauerDiagrams}.  The most difficult point of the proof is to verify that the given list is a complete set of relations.  That is, if a morphism is written in two ways using the given generators that it is possible to rewrite one into the other only using linear combinations of equations derived from the listed relations via tensor products and compositions.  For this it suffices to note that the proof of \cite[Theorem 2.6]{LZ1} applies verbatim to our situation as long as we take care to account for the powers of $\varepsilon$ which appear.
\end{proof}

\subsection{}\label{SS:duality}  We now introduce several maps which will also be needed in the next section and in the proof of \cref{T:faithful}.  
Given nonnegative integers $a,b,r,s$ with $a \leq r$ and $b \leq s$, we have the following linear maps on the spaces of marked Brauer diagrams: 
\begin{equation}\label{E:Jmaps}
\begin{aligned} 
\rightupJ_{b}&:B_{r,s}(\delta, \varepsilon) \to B_{r+b,s-b}(\delta, \varepsilon) \\
\leftupJ_{b}&:B_{r,s}(\delta, \varepsilon) \to B_{r+b,s-b}(\delta, \varepsilon) \\
\rightdownJ_{a}&:B_{r,s}(\delta, \varepsilon) \to B_{r-a,s+a}(\delta, \varepsilon)  \\
\leftdownJ_{a}&:B_{r,s}(\delta, \varepsilon) \to B_{r-a,s+a}(\delta, \varepsilon) 
\end{aligned}
\end{equation}
Each map is given by tensor products and compositions of marked Brauer diagrams, and scaling by some power of $\varepsilon$.   We define them on marked Brauer diagrams as follows:

\begin{figure}[h]
\centering
\includegraphics[scale=1]{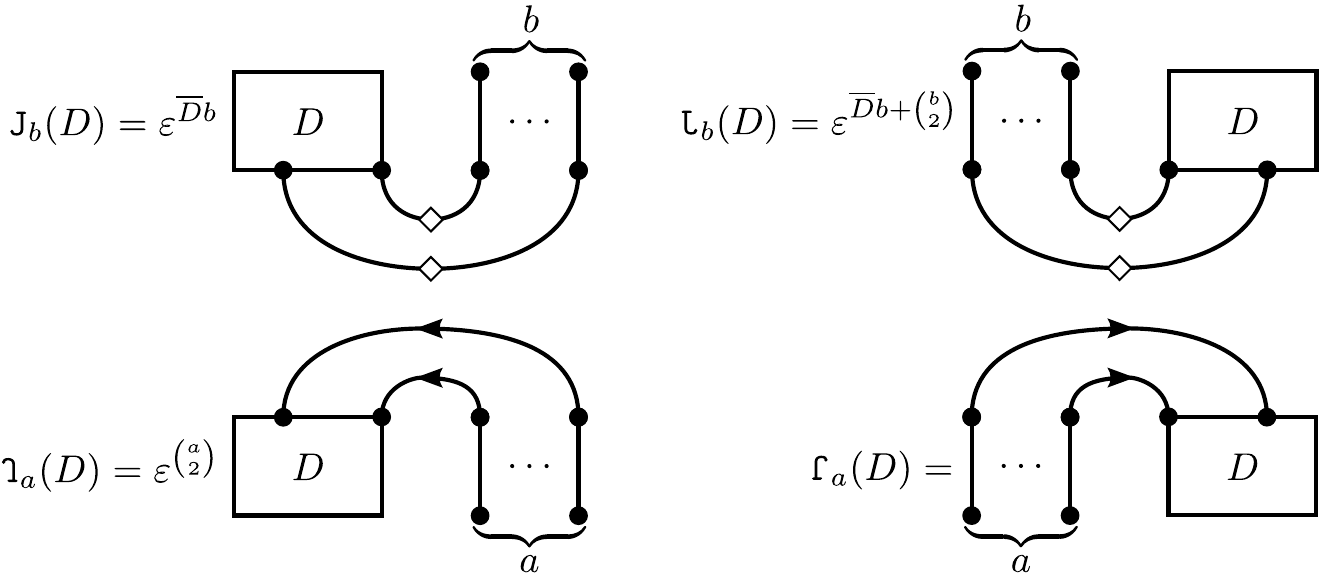}.
\end{figure}

We note that each map is homogeneous in the $\Z_{2}$-grading and with degree given by $a$ or $b$, as the case may be, reduced modulo two.   A straightforward calculation proves the following lemma and explains the choice of scalings by $\varepsilon$.

\begin{lemma}\label{L:Jinverses} Given nonnegative integers $a,b,r,s$ with $a \leq r$ and $b \leq s$, we have the following 
\begin{equation*}
\rightupJ_{a} \circ \rightdownJ_{a} =\rightdownJ_{b} \circ \rightupJ_{b} = \leftupJ_{a} \circ \leftdownJ_{a} = \leftdownJ_{b} \circ \leftupJ_{b} = \Id_{B_{r,s}(\delta, \varepsilon)}.
\end{equation*}
In particular, the maps defined in \cref{E:Jmaps} are homogeneous superspace isomorphisms. 
\end{lemma}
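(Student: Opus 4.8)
The plan is to reduce everything to the single-strand zigzag (snake) identities already recorded in the presentation of \cref{T:generatorsandrelations}, and then to track the accumulated powers of $\varepsilon$ carefully enough to see that each composite is the identity on the nose rather than a nonzero scalar multiple of it. First I would unwind the definitions from the figure accompanying \cref{E:Jmaps}. Each $\mathtt{J}$-map transfers a block of $a$ (or $b$) strands between the top and bottom rows by tensoring with the corresponding number of cups or caps, routing those strands past the rest of the diagram using the braiding $X$, and scaling by a fixed power of $\varepsilon$; the ``down'' maps and ``up'' maps transfer strands in opposite directions, and the left/right pairs are mirror images. The key geometric observation is that in each of the four composites -- for instance $\rightupJ_{a} \circ \rightdownJ_{a}$ -- the cup introduced by one map meets the cap introduced by the other on each affected strand, producing a cup-cap zigzag, while the unaffected strands pass straight through.

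The heart of the matter is then the single-strand zigzag, which straightens by the two snake relations recorded in \cref{T:generatorsandrelations},
\[
\cap \otimes I \circ I \otimes \cup = I, \qquad I \otimes \cap \circ \cup \otimes I = \varepsilon I,
\]
together with their left/right mirror images. That these two relations differ by precisely a factor of $\varepsilon$ is exactly what dictates the four-fold pairing in the statement, and it explains why the scalings by $\varepsilon$ are hard-wired into \cref{E:Jmaps}. To pass from a single strand to a block of $a$ strands I would peel off the outermost strand and induct on $a$: after cancelling the braidings that route the block against one another using $X \circ X = I \otimes I$ and the naturality relation $I \otimes X \circ \cup \otimes I = X \otimes I \circ I \otimes \cup$, the $a$-strand composite splits as a single-strand zigzag tensored with the $(a-1)$-strand composite, closing the induction.

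The main obstacle is the sign bookkeeping, which is the real content of the ``straightforward calculation.'' Each time a strand crosses another through $X$, an arrow reverses, or two markings exchange lines of latitude, a factor of $\varepsilon$ is produced -- via $\cap \circ X = \varepsilon \cap$, the bead and arrow relations of \cref{D:markedBrauerDiagrams}, and cup/cap reordering relations such as $I \otimes I \otimes \cup \circ \cup = \varepsilon(\cup \otimes I \otimes I \circ \cup)$. One must verify that the exponent of $\varepsilon$ accumulated while bending a block down exactly cancels the exponent accumulated while bending it back up, once the prefactors built into the definitions are taken into account, so that each composite equals $\Id_{B_{r,s}(\delta, \varepsilon)}$ with coefficient $1$ rather than some $\varepsilon^{k}$.

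Granting this, each $\mathtt{J}$-map has a two-sided inverse among the others -- $\rightupJ_{a}$ and $\rightdownJ_{a}$ are mutually inverse, as are $\leftupJ_{a}$ and $\leftdownJ_{a}$ -- and is therefore bijective. Since the maps were already observed to be homogeneous of the indicated degree, they are homogeneous superspace isomorphisms, which is the final assertion.
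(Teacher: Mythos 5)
Your proposal is correct and matches the paper's approach: the paper proves this lemma by exactly the ``straightforward calculation'' you outline---straightening each cup--cap zigzag via the snake relations of \cref{T:generatorsandrelations}, cancelling crossings, and noting that the $\varepsilon$-prefactors built into \cref{E:Jmaps} are chosen precisely so the accumulated signs cancel (the paper explicitly says the calculation ``explains the choice of scalings by $\varepsilon$''). Your strand-peeling induction and identification of the mutually inverse pairs $\rightupJ_{a}, \rightdownJ_{a}$ and $\leftupJ_{a}, \leftdownJ_{a}$ is the intended argument.
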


Using these maps we can define a superspace isomorphism which we call the \emph{transpose},
\[
B_{r,s}(\delta, \varepsilon) \to B_{s,r}(\delta, \varepsilon),
\] and is denoted by $D \mapsto D'$.  It is given by 
\begin{equation}\label{E:transposemap}
D' =\varepsilon^{\binom{r}{2}}\left( \leftdownJ_{r} \circ \rightupJ_{s}\right)(D).
\end{equation}
A direct calculation shows that on the generators given in \cref{T:generatorsandrelations} we have $I'=I$, $X'=\varepsilon X$, $\cup'=\varepsilon\cap$, and $\cap'=\cup$.  Using the fact that in the non-trivial case $r+s$ must be even, a direct calculation also verifies that the transpose is a graded anti-homomorphism with respect to the composition operation.  That is, we have the following result.


\begin{lemma}\label{L:transpose}  If $x \in B_{r,s}(\delta, \varepsilon)$ and $y \in  B_{s,t}(\delta, \varepsilon)$ are homogeneous elements, then 
\[
(x \circ y)' = \varepsilon^{\bar{x}\cdot \bar{y}}y' \circ x'.
\]

\end{lemma}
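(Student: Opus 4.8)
The plan is to leverage the presentation of $\BB(\delta,\varepsilon)$ furnished by \cref{T:generatorsandrelations} together with the values of the transpose on generators already recorded above, namely $I'=I$, $X'=\varepsilon X$, $\cup'=\varepsilon\cap$, and $\cap'=\cup$. Geometrically the statement is clear: the transpose reflects a diagram across a horizontal line, interchanging cups with caps and beads with arrows, so reflecting the vertical stack $x\circ y$ (with $x$ above $y$) yields the reversed stack $y'\circ x'$ of the reflected pieces; the scalar $\varepsilon^{\bar x\cdot\bar y}$ is precisely the cost, governed by the first relation of \cref{D:markedBrauerDiagrams}, of the markings of $x$ and those of $y$ trading relative heights under the reflection. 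The work is to promote this picture to a calculation that accounts for every power of $\varepsilon$ introduced by \cref{E:transposemap} and \cref{L:Jinverses}.

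Both sides of the asserted identity are bilinear in the homogeneous arguments, so it suffices to take $x$ and $y$ to be marked Brauer diagrams, and, after slicing each into horizontal strips, to take them to be composites of \emph{elementary} morphisms $I^{\otimes p}\otimes g\otimes I^{\otimes q}$ with $g\in\{X,\cup,\cap\}$. I would first reduce the left-hand factor to a single elementary morphism by induction on the number of its factors: writing $x=g_{1}\circ x_{0}$, the desired identity for $(x,y)$ follows by combining the elementary case for the pair $(g_{1},\,x_{0}\circ y)$, the inductive hypothesis for $(x_{0},y)$, and the elementary case for $(g_{1},x_{0})$ used to rewrite $x'=\varepsilon^{\bar g_{1}\cdot\bar x_{0}}x_{0}'\circ g_{1}'$. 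Since $\varepsilon^{2}=1$, the powers of $\varepsilon$ collected in this step, namely $\bar g_{1}\bar x_{0}+\bar g_{1}\bar y+\bar x_{0}\bar y+\bar g_{1}\bar x_{0}$, reduce modulo two to $(\bar g_{1}+\bar x_{0})\bar y=\bar x\cdot\bar y$, exactly as required.

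It then remains to treat the case in which $x$ is a single elementary morphism and $y$ is arbitrary. Here I would argue directly from the definition \cref{E:transposemap}: stacking one elementary strip on top of $y$ alters only the top boundary, and the transpose carries it to the bottom, so $(x\circ y)'$ can be compared with $y'\circ x'$ by a local manipulation using only the defining relations of \cref{D:markedBrauerDiagrams} and the snake identities of \cref{L:Jinverses}. Descending from a general elementary morphism to the four generators requires in addition the companion compatibility $(x\otimes y)'=\varepsilon^{\bar x\cdot\bar y}(x'\otimes y')$, proved in the same direct fashion, which lets one strip off the flanking identities. The finitely many base identities $(g\circ h)'=\varepsilon^{\bar g\cdot\bar h}h'\circ g'$ for $g,h\in\{I,X,\cup,\cap\}$ are then checked from the generator values and \cref{T:generatorsandrelations}; for example, with $x=\cap$ and $y=\cup$ one has $\bar x=\bar y=\1$, so $(x\circ y)'=(\cap\circ\cup)'=\delta'=\delta$, while $\varepsilon^{\bar x\cdot\bar y}y'\circ x'=\varepsilon\,\cup'\circ\cap'=\varepsilon(\varepsilon\cap)\circ\cup=\varepsilon^{2}\,\cap\circ\cup=\delta$, in agreement.

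The main obstacle is bookkeeping: keeping the numerous powers of $\varepsilon$ — those built into the transpose through \cref{E:transposemap}, those produced by reordering markings via \cref{D:markedBrauerDiagrams}, and those accumulated in the inductive reduction — consistent throughout, while also confirming that the elementary-on-top computation and the tensor compatibility interlock correctly so that the two reductions dovetail. Everything else is routine once the sign conventions are pinned down, which is presumably why the authors describe the verification as a direct calculation.
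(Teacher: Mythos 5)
Your inductive framework is fine (the $\varepsilon$-count in the reduction $x=g_{1}\circ x_{0}$ is correct, and composition preserves the grading, so $\bar{x}=\bar{g}_{1}+\bar{x}_{0}$), but the argument rests on a false ingredient. The map $x\mapsto x'$ of \cref{E:transposemap} is \emph{not} reflection across a horizontal line — that is the other anti-involution $x\mapsto x^{\bullet}=w_{r}\circ x'\circ w_{s}^{-1}$ of \cref{L:involutions}. As recorded after \cref{P:endofunctors}, the transpose is rotation by $180$ degrees (up to powers of $\varepsilon$), which \emph{reverses} the left-to-right order of the strands. Consequently your companion compatibility $(x\otimes y)'=\varepsilon^{\bar{x}\cdot\bar{y}}(x'\otimes y')$ is wrong; the correct statement is the order-reversing one, $(x\otimes y)'=\varepsilon^{\bar{x}\cdot\bar{y}}\,y'\otimes x'$, so that for an elementary strip $(I^{\otimes p}\otimes g\otimes I^{\otimes q})'=I^{\otimes q}\otimes g'\otimes I^{\otimes p}$: the generator migrates from position $p+1$ to position $q+1$ rather than staying put. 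You can see your version fail against the paper's own data: it would give $e_{i}'=e_{i}$, whereas \cref{L:involutions} gives $e_{i}'=e_{r-i}$. More decisively, it contradicts the relations of \cref{T:generatorsandrelations} when $\varepsilon=-1$: applying the asserted anti-homomorphism property to the snake relation $\cap\otimes I\circ I\otimes\cup=I$, your tensor rule yields
\[
(\cap\otimes I\circ I\otimes\cup)'=\varepsilon\,(I\otimes\cup')\circ(\cap'\otimes I)=\varepsilon\,(\varepsilon\,I\otimes\cap)\circ(\cup\otimes I)=(I\otimes\cap)\circ(\cup\otimes I)=\varepsilon I\neq I,
\]
using the relation $I\otimes\cap\circ\cup\otimes I=\varepsilon I$, while the order-reversing rule gives $\varepsilon\,(\varepsilon\,\cap\otimes I)\circ(I\otimes\cup)=I$ as required. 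So the base cases of your reduction, as written, would not close up; your worked example $(\cap\circ\cup)'$ happens to succeed only because it involves no horizontal juxtaposition.

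The repair is mechanical rather than conceptual: replace the tensor lemma by its order-reversing form and redo the elementary base cases with the flipped positions; your inductive step and its sign bookkeeping survive verbatim, since they never used the tensor rule. It is worth noting that the paper does not argue through the presentation at all: it verifies the lemma by a direct calculation from the definition \cref{E:transposemap} in terms of the maps \cref{E:Jmaps}, using \cref{L:Jinverses} and the fact that $r+s$ is even in the non-trivial case to track the powers of $\varepsilon$. Your generators-and-relations route is a genuinely different (and, once corrected, perfectly workable) approach, with the advantage that it localizes all sign checks to finitely many identities among $I$, $X$, $\cup$, $\cap$; but as proposed it proves the wrong tensor behavior and hence does not establish the lemma.
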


\subsection{}\label{SS:dualityII}  The marked Brauer category admits several endofunctors which arise from natural operations on marked Brauer diagrams.  We could use \cref{T:generatorsandrelations} to define the functors, but it is also straightforward to define them in general and check directly that they define endofunctors in each case.  We do this in the following proposition and leave the verification to the reader, noting that \cref{L:transpose} is used to check the second functor. In the proposition we write $w_{r} \in B_{r,r}(\delta, \varepsilon)$ for the diagram given in Figure~2.  Note that this element is invertible in $B_{r,r}(\delta, \varepsilon)$ under the composition operation.
\pagebreak

\begin{figure}[h]
\centering
\includegraphics{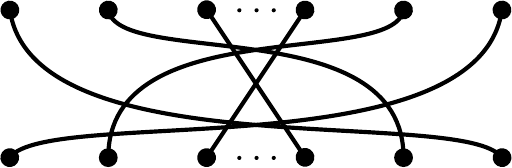}
\caption{The element $w_{r} \in B_{r,r}(\delta, \varepsilon)$.}
\end{figure}

\begin{prop}\label{P:endofunctors} The marked Brauer category admits the following endofunctors:
\begin{enumerate}
\item There is a covariant endofunctor on $\BB (\delta, \varepsilon)$ given on objects by $[a] \mapsto [a]$ and on $x \in B_{r,s}(\delta, \varepsilon)$ by $x \mapsto w_{r} \circ x \circ w_{s}^{-1}$.
\item There is a contravariant endofunctor on $\BB (\delta, \varepsilon)$ given on objects by $[a] \mapsto [a]$ and on $x \in B_{r,s}(\delta, \varepsilon)$ by $x \mapsto x'$, where $x'$ is the transpose defined in the previous section. 
\item There is a contravariant endofunctor on $\BB (\delta, \varepsilon)$ given on objects by $[a] \mapsto [a]$ and on $x \in B_{r,s}(\delta, \varepsilon)$ by  $x \mapsto w_{r} \circ x' \circ w_{s}^{-1}$.
\end{enumerate}

\end{prop}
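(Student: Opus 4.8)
The plan is to verify in each case the two functor axioms---preservation of identity morphisms and (anti)multiplicativity under composition---since well-definedness as a $k$-linear map is automatic: each assignment is built out of operations already defined on $\BB(\delta,\varepsilon)$, namely composition with the fixed invertible morphisms $w_r$ and the transpose $x \mapsto x'$. All three assignments fix objects, so there is nothing to check there. Throughout I read ``contravariant'' in the sense appropriate to a category enriched over superspaces, so that a contravariant functor is required to satisfy the composition law with the Koszul sign $(-1)^{\bar x\cdot \bar y}$; by \cref{R:grading} this sign equals $\varepsilon^{\bar x\cdot\bar y}$ for both $\varepsilon = 1$ (where the grading is trivial) and $\varepsilon=-1$.

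For (1) the argument is formal. The morphism $w_r$ is even and invertible, so conjugation introduces no signs and is covariant. Preservation of identities is immediate from $w_r\circ \Id_{[r]}\circ w_r^{-1}=\Id_{[r]}$, and multiplicativity follows by inserting $w_s^{-1}\circ w_s=\Id_{[s]}$: for $x\in B_{r,s}(\delta,\varepsilon)$ and $y\in B_{s,t}(\delta,\varepsilon)$ one has $w_r\circ(x\circ y)\circ w_t^{-1}=(w_r\circ x\circ w_s^{-1})\circ(w_s\circ y\circ w_t^{-1})$. Thus (1) is an endofunctor; it is in fact an automorphism, with inverse given by conjugation by the family $\{w_r^{-1}\}$.

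For (2) the multiplicativity is exactly \cref{L:transpose}: for homogeneous $x,y$ one has $(x\circ y)'=\varepsilon^{\bar x\cdot\bar y}\,y'\circ x'$, which is precisely the composition law of a contravariant functor of superspace-enriched categories once the scalar is identified with the Koszul sign above. It remains to check that the transpose fixes identities, $(\Id_{[r]})'=\Id_{[r]}$. One can do this by a direct diagrammatic calculation from the definition in \eqref{E:transposemap}, or argue abstractly: since $\Id_{[r]}$ is even, \cref{L:transpose} gives $z'=(\Id_{[r]}\circ z)'=z'\circ(\Id_{[r]})'$ for every morphism $z$ out of $[r]$, and as $z$ varies $z'$ ranges over all morphisms into $[r]$ because the transpose is bijective by \cref{L:Jinverses}; taking $z'=\Id_{[r]}$ then yields the claim. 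This establishes (2).

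Finally, (3) is the composite of the endofunctors in (1) and (2), and a composite of a covariant functor with a contravariant one is contravariant; unwinding the definitions (and tracking the subscripts on the $w$'s) recovers the stated formula. The only genuinely non-formal ingredients are thus already isolated in \cref{L:transpose} and \cref{L:Jinverses}, and I expect the main point requiring care to be the bookkeeping of the powers of $\varepsilon$---specifically, recognizing that the scalar appearing in \cref{L:transpose} is exactly the Koszul sign demanded by super-contravariance, so that the transpose defines an honest functor rather than a merely sign-twisted map.
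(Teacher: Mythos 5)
Your proposal is correct and takes essentially the same route as the paper, which explicitly leaves the verification to the reader while noting that \cref{L:transpose} is the key input for the second functor---precisely how you deploy it, with (1) handled by formal conjugation by the even invertible element $w_r$ and (3) obtained as a composite. One small remark: under the paper's composition convention (where $x\in B_{r,s}(\delta,\varepsilon)$ composes as $B_{r,s}\circ B_{s,t}\to B_{r,t}$), the formula in (3) should read $w_s\circ x'\circ w_r^{-1}$, which is exactly what your composite of (1) and (2) produces, so your parenthetical about tracking the $w$-subscripts quietly absorbs a typo in the statement rather than a gap in your argument.
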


The first corresponds to reflecting diagrams across the vertical line through the center of the diagram, the second corresponds to rotation of diagrams by $180$ degrees, and the third to reflecting diagrams across the equator.

\section{The Marked Brauer Algebra}\label{S:MarkedBrauerAlgebra}  

\subsection{}  We define the \emph{marked Brauer algebra} to be
\[
B_{r}(\delta, \varepsilon) = \End_{\BB (\delta, \varepsilon)}([r],[r]) = B_{r,r}(\delta, \varepsilon).
\]  This is an associative algebra under the composition operation. We first provide a presentation of the marked Brauer algebra by generators and relations.  For $i=1, \dotsc , r-1$, let $e_{i}, s_{i} \in  B_{r}(\delta, \varepsilon)$ be given by 
\begin{align*}
e_{i} & = \left(I^{\otimes i-1} \otimes \cup \otimes I^{\otimes r-i-1} \right) \circ \left(I^{\otimes i-1} \otimes \cap \otimes I^{\otimes r-i-1} \right) \\
s_{i} &= I^{\otimes i-1} \otimes X \otimes I^{\otimes r-i-1}.
\end{align*}   That is, in terms of marked Brauer diagrams:

\begin{figure}[h]
\centering
\includegraphics[scale=1]{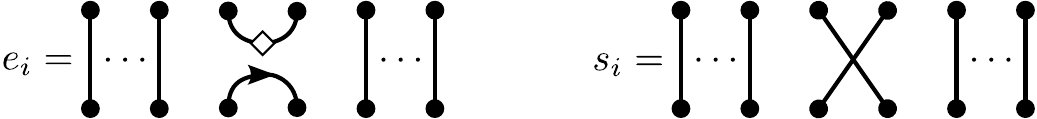}.
\end{figure}

\begin{theorem}\label{C:AlgebraGeneratorsRelations}  As an associative $k$-algebra the marked Brauer algebra $B_{r}(\delta, \varepsilon)$ is isomorphic to the algebra generated by $e_{1}, \dotsc , e_{r-1}$, $s_{1}, \dotsc , s_{r-1}$ subject to the relations:
\[
\begin{aligned}[c]
s_{i}^{2}&=1,\\
e_{i}s_{i}&=\varepsilon e_{i},\\
s_{i}s_{j}&= s_{j}s_{i}, \\
s_{i}s_{i+1}s_{i}&=s_{i+1}s_{i}s_{i+1}, \\
s_{i}e_{j}&=e_{j}s_{i},\\
s_{i}e_{i+1}e_{i}&=\varepsilon s_{i+1}e_{i}, 
\end{aligned}
\hspace{.25in}
\begin{aligned}[c]
e_{i}^{2}&=\delta, \\
s_{i}e_{i}&=e_{i},\\
e_{i}e_{j}&=e_{j}e_{i}, \\
e_{i}e_{i+1}e_{i}&=\varepsilon e_{i}, \\
e_{i+1}e_{i}e_{i+1}&=\varepsilon e_{i+1}, \\
e_{i+1}e_{i}s_{i+1}&=\varepsilon e_{i+1}s_{i}.
\end{aligned}
\]  In the above equations $i$ and $j$ ranges over all possible values from among $1, \dotsc r-1$ subject to $j \neq i, i \pm 1$.

\end{theorem}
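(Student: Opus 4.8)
The plan is to realize the stated presentation as an \emph{algebra} counterpart of the \emph{tensor-category} presentation already in hand from \cref{T:generatorsandrelations}. Write $A$ for the abstract $k$-algebra defined by the generators $e_{1},\dots,e_{r-1},s_{1},\dots,s_{r-1}$ and the listed relations. Since each of the diagrams $e_{i}$ and $s_{i}$ is assembled from the tensor generators $I,X,\cup,\cap$ by compositions and tensor products, there is a candidate algebra homomorphism $\Phi\colon A\to B_{r}(\delta,\varepsilon)$ sending the abstract generators to the corresponding marked Brauer diagrams. I would first check that $\Phi$ is well defined, i.e.\ that each listed relation actually holds among these diagrams; this is a finite, routine diagrammatic verification using the local rules of \cref{S:MarkedBrauerDiagrams} (equivalently, the relations of \cref{T:generatorsandrelations}). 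The only feature needing attention is the placement of the powers of $\varepsilon$: each sign records an application of the marking-trade rule or the arrow-reversal rule, so relations such as $e_{i}s_{i}=\varepsilon e_{i}$ and $e_{i}e_{i+1}e_{i}=\varepsilon e_{i}$ come precisely from these two local moves. Granting well-definedness, it then suffices to prove that $\Phi$ is bijective.

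For surjectivity I would use the basis of standard marked Brauer diagrams from \cref{L:basis}. It is classical that every \emph{unmarked} Brauer diagram on $2r$ points is a composite of the elementary diagrams $s_{i}$ and $e_{i}$; since $e_{i}$ already carries a bead on its cup and a rightward arrow on its cap, and since any marking can be slid along its edge or reversed at the cost of a unit $\varepsilon^{\pm1}$, every standard diagram equals a scalar multiple of a product of the $\Phi(s_{i})$ and $\Phi(e_{i})$. Hence these elements generate $B_{r}(\delta,\varepsilon)$ and $\Phi$ is onto.

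Injectivity I would obtain by a dimension count rather than by a direct kernel computation. The goal is to show $\dim_{k}A\le (2r-1)!!$, after which surjectivity together with $\dim_{k}B_{r}(\delta,\varepsilon)=(2r-1)!!$ (\cref{L:basis}) forces $\Phi$ to be an isomorphism. To bound $\dim_{k}A$ I would produce a normal form for words in the $e_{i},s_{i}$: using only the listed relations, every monomial can be straightened to a canonical representative, and the set of such representatives is in bijection with the Brauer diagrams on $2r$ points. The key observation making this tractable is that setting $\varepsilon=1$ turns each listed relation into the corresponding relation for the ordinary Brauer algebra, whose normal-form reduction is known; the $\varepsilon$-factors never alter the underlying combinatorics of the reduction and only rescale by units, so the classical straightening argument applies verbatim and yields at most $(2r-1)!!$ normal forms.

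The main obstacle is exactly this completeness step: one must be certain that the listed relations suffice to reduce an arbitrary word to normal form, i.e.\ that no additional relation is needed beyond those recorded. I would control this by leaning on two facts proved earlier: the relations are genuinely satisfied in $B_{r}(\delta,\varepsilon)$ (so the $\varepsilon$-signs are internally consistent and cannot obstruct a reduction that works in the $\varepsilon=1$ case), and the target already has the correct dimension $(2r-1)!!$ by \cref{L:basis}. Alternatively, one can bypass a fresh count by deducing the straightening directly from the completeness of the category presentation in \cref{T:generatorsandrelations}, restricted to endomorphisms of $[r]$; this is cleaner conceptually but requires checking that every relation among endomorphisms coming from a category relation is a consequence of the listed algebra relations, which is the same bookkeeping in a different guise. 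Finally, the grading assertion follows by assigning the stated parities to $e_{i}$ and $s_{i}$ and checking compatibility with \cref{R:grading} on generators.
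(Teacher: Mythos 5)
Your proposal is correct, but it takes a genuinely different route from the paper. The paper's proof is a two-line corollary of \cref{T:generatorsandrelations}: since every diagram in $B_{r}(\delta,\varepsilon)=\Hom_{\BB(\delta,\varepsilon)}([r],[r])$ has equal numbers of cups and caps, the tensor generators $\cup$ and $\cap$ enter the endomorphism algebra of $[r]$ only through the combinations $e_{i}$ (and $X$ through the $s_{i}$), so the completeness of the categorical relations specializes directly to the listed algebra relations with no fresh counting. You instead prove injectivity by a dimension bound: well-definedness and surjectivity as in the paper's implicit argument, then $\dim_{k}A\le(2r-1)!!$ via straightening, compared against \cref{L:basis}. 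This is essentially Moon's diamond-lemma argument (the $\varepsilon=-1$ case) generalized to all $\varepsilon$ --- precisely the machinery the authors built the categorical presentation to replace. Your version is nonetheless sound, and for a good reason you only gesture at: the upper bound needs spanning, not confluence, and since each marked relation equates the same monomials as its classical counterpart up to a unit power of $\varepsilon$, every classical reduction step lifts with a unit scalar. The one thing you must make explicit is that the completeness of the $\varepsilon=1$ presentation of the ordinary Brauer algebra is imported as known external input (it is literally the $\varepsilon=1$ case of the theorem), whereas the paper's route keeps all the hard completeness work in one place, namely the category-level argument borrowed from Lehrer--Zhang in \cref{T:generatorsandrelations}. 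Your parenthetical alternative --- restricting the category presentation to endomorphisms of $[r]$ --- is exactly the paper's proof, and the ``bookkeeping'' you worry about there is resolved by the equal-cups-and-caps observation. Two small points: the theorem as stated carries no grading claim, so your final sentence is superfluous; and the relation $e_{i}^{2}=\delta$ should diagrammatically read $e_{i}^{2}=\delta e_{i}$ (your straightening tacitly, and correctly, uses the latter form, since $e_{i}^{2}$ has $r-2$ through strings and cannot be a scalar multiple of the identity).
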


\begin{proof}  The result follows from \cref{T:generatorsandrelations} and the observation that each marked Brauer diagram in $B_{r}(\delta, \varepsilon)$  must have an equal number of cups and caps.
\end{proof}

From the definition it is clear that $B_{r}(\delta, 1)$ is isomorphic to the ordinary Brauer algebra $B_{r}(\delta)$.  On the other hand, when $\varepsilon=-1$ we can compare the previous theorem with \cite[Definition 2.2]{Moon} and see that the marked Brauer algebra $B_{r}(0,-1)$ over $\mathbb{C}$ is precisely the algebra $A_{r}$ defined by Moon via generators and relations.  In particular, \cref{L:basis} provides an elementary proof of \cite[Theorem 2.9]{Moon} and the marked Brauer algebra provides a unifying explanation for the similarities between $A_{r}\cong B_{r}(0,-1)$ and $B_{r}(0,1)$ observed by Moon.

The contravariant endofunctors in \cref{P:endofunctors} define involutive anti-homomorphisms (or, for short, anti-involutions) on $B_{r}(\delta, \varepsilon)$.  A direct computation verifies the following lemma.

\begin{lemma}\label{L:involutions}  The marked Brauer algebra $B_{r}(\delta, \varepsilon)$ admits the following two anti-involutions:
\begin{enumerate}
\item The map $x \mapsto x'$.  On generators it is given by $e_{i}\mapsto e_{r-i}$ and $s_{i}\mapsto \varepsilon s_{r-i}$.
\item The map $x \mapsto x^{\bullet}:=w_{r}\circ x' \circ w_{r}^{-1}$.  On generators it is given by $e_{i}\mapsto \varepsilon e_{i}$ and $s_{i}\mapsto \varepsilon s_{i}$.
\end{enumerate}
\end{lemma}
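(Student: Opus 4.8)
The plan is to read both statements off \cref{P:endofunctors}, reducing the lemma to two points: the anti-homomorphism property and the explicit action on the algebra generators $e_i,s_i$. Since $B_r(\delta,\varepsilon)=\End_{\BB(\delta,\varepsilon)}([r])$ and parts (2) and (3) of \cref{P:endofunctors} are contravariant endofunctors that fix the object $[r]$, their restrictions to this endomorphism algebra are automatically graded anti-endomorphisms. For the transpose this signed anti-homomorphism property is exactly \cref{L:transpose}, namely $(xy)'=\varepsilon^{\bar x\bar y}y'x'$. For $x\mapsto x^{\bullet}=w_r x' w_r^{-1}$ it follows from the same identity together with the observation that $w_r$ is even and invertible, so that conjugation by $w_r$ is a grading-preserving automorphism: indeed $(xy)^{\bullet}=w_r(xy)'w_r^{-1}=\varepsilon^{\bar x\bar y}w_r y'x'w_r^{-1}=\varepsilon^{\bar x\bar y}(w_r y'w_r^{-1})(w_r x'w_r^{-1})=\varepsilon^{\bar x\bar y}y^{\bullet}x^{\bullet}$.

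The substantive computation is the action on generators, which I would carry out diagrammatically. Recall from just after \cref{E:transposemap} the transpose values $I'=I$, $X'=\varepsilon X$, $\cup'=\varepsilon\cap$, and $\cap'=\cup$, and that by \cref{P:endofunctors} the transpose is realized as a $180$-degree rotation of diagrams. Writing $s_i=I^{\otimes i-1}\otimes X\otimes I^{\otimes r-i-1}$ and $e_i=(I^{\otimes i-1}\otimes\cup\otimes I^{\otimes r-i-1})(I^{\otimes i-1}\otimes\cap\otimes I^{\otimes r-i-1})$, the rotation reverses the left-to-right order of the strands, so the generator in position $i$ is carried to the one in position $r-i$. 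Expanding the transpose of $s_i$ via $X'=\varepsilon X$ and the transpose of $e_i$ via \cref{L:transpose}, and collecting the resulting powers of $\varepsilon$, gives $s_i'=\varepsilon s_{r-i}$ and $e_i'=e_{r-i}$ (in the latter the factor of $\varepsilon$ from $\cup'=\varepsilon\cap$ cancels the sign coming from \cref{L:transpose}). For $\bullet$, which by \cref{P:endofunctors} is reflection across the equator, I would conjugate these answers by $w_r$, using that conjugation by the reversing diagram sends a generator in position $j$ to one in position $r-j$ and introduces a factor of $\varepsilon$ precisely when the generator carries markings; this yields $e_i^{\bullet}=w_r e_i' w_r^{-1}=w_r e_{r-i}w_r^{-1}=\varepsilon e_i$ and $s_i^{\bullet}=w_r s_i' w_r^{-1}=w_r(\varepsilon s_{r-i})w_r^{-1}=\varepsilon s_i$.

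Involutivity then follows formally. Because the transpose preserves the $\Z_2$-grading, applying \cref{L:transpose} twice gives $(xy)''=\varepsilon^{2\bar x\bar y}x''y''=x''y''$, so the map $x\mapsto x''$ is an honest algebra homomorphism; as $e_1,\dots,e_{r-1},s_1,\dots,s_{r-1}$ generate $B_r(\delta,\varepsilon)$ by \cref{C:AlgebraGeneratorsRelations}, it suffices to check $(e_i')'=e_{r-(r-i)}=e_i$ and $(s_i')'=(\varepsilon s_{r-i})'=\varepsilon^2 s_i=s_i$, both immediate from the generator formulas. The identical argument handles $\bullet$: it is grading-preserving, so $\bullet^2$ is a homomorphism, and $(e_i^{\bullet})^{\bullet}=\varepsilon^2 e_i=e_i$, $(s_i^{\bullet})^{\bullet}=\varepsilon^2 s_i=s_i$.

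The main obstacle is entirely one of sign bookkeeping: one must track the powers of $\varepsilon$ produced when $e_i$ and $s_i$ are decomposed into $I,X,\cup,\cap$ and the transpose is pushed through the intervening tensor products and compositions, and likewise the $\varepsilon$ that appears when the markings on $e_i$ are slid past the crossings of $w_r$ during the conjugation step. Once the sign conventions of \cref{L:transpose} and of the marking relations in \cref{D:markedBrauerDiagrams} are respected, each of these reduces to a finite local check on diagrams.
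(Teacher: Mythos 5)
Your proposal is correct and takes essentially the paper's route: the paper's proof of \cref{L:involutions} consists precisely of invoking the contravariant endofunctors of \cref{P:endofunctors} (whose anti-homomorphism property rests on \cref{L:transpose}) together with ``a direct computation'' on the generators, which is exactly the sign bookkeeping you carry out, including the correct cancellation $\varepsilon\cdot\varepsilon=1$ in $e_i'=e_{r-i}$ and the extra $\varepsilon$ from conjugation by $w_r$. One small point worth making explicit: every diagram in $B_{r,r}(\delta,\varepsilon)$ has equally many cups and caps and hence an even number of markings, so all elements of the marked Brauer algebra are of even degree and the signs $\varepsilon^{\bar{x}\cdot\bar{y}}$ you carry are identically $1$ there, which is what turns your graded anti-endomorphisms into the honest anti-involutions the lemma (and the later cellular-algebra discussion) requires.
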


These maps admit natural diagrammatic descriptions.   Up to a power of $\varepsilon$, on marked Brauer diagrams the first anti-involution is rotation by $180$ degrees and the second is reflection across the equator.

\subsection{}\label{SS:iteratedinflation}  We now study the marked Brauer algebra using techniques introduced by Koenig-Xi to study the ordinary Brauer algebra in \cite{KX1,KX2}. First note that we may identify the group algebra of the symmetric group $k\Sigma_{r}$ as the subalgebra of $B_{r}(\delta, \varepsilon)$ spanned by marked Brauer diagrams which consist of only through strings.

For $t=0, \dotsc , r$, let $\ideal_{t}$ be the subspace of $B_{r}(\delta, \varepsilon)$ spanned by all marked Brauer diagrams with $t$ or fewer through strings.  It is straightforward to verify that each $\ideal_{t}$ is a two-sided ideal of $B_{r}(\delta, \varepsilon)$ and that we have the filtration: 
\begin{equation}\label{E:filtration}
0 \subset \ideal_{a} \subset \ideal_{a+2}  \subset \dotsb \subset \ideal_{r-2} \subset \ideal_{r}=B_{r}(\delta, \varepsilon),
\end{equation}
where $a$ is zero or one depending on the parity of $r$.  

Given an algebra $B$, a vector space $V$, and a bilinear form $\langle-, -\rangle: V\otimes V \to B$, the authors of  \cite{KX2} define the \emph{inflation}\footnote{To be precise, the authors of \cite{KX2} also require a compatible anti-involution as part of their definition of inflation.} of $B$ along $V$ to be the algebra $A=V\otimes V \otimes B$ with multiplication given by 
\[
\left( x_{1} \otimes y_{1} \otimes b_{1}\right)\cdot \left( x_{2} \otimes y_{2} \otimes b_{2}\right) = x_{1}\otimes y_{2} \otimes b_{1}\langle y_{1}, x_{2}\rangle b_{2}.
\]  As discussed in \cite{KX2}, with the appropriate assumptions this defines an associative algebra and this construction is equivalent to the generalized matrix algebras first introduced by Brown \cite{Brown} and is also implicit in \cite{GL}.  Our first goal is to show that each $\ideal_{t}/\ideal_{t-2}$ in \cref{E:filtration} is an inflation.

We would first like to set some notation. Given $0 \leq t \leq r$ with $t=r-2k$ for some $k$, let $E_{r,t}$ be the set of all graphs on $r$ vertices (which we label with $1, \dotsc ,r$) which have precisely $k=(r-t)/2$ edges and each vertex is incident to at most one edge.  Given a standard marked Brauer diagram $D$, let $t_{D}$ be the full subgraph of $D$ obtained by taking the vertices labelled by $1, \dotsc , r$ (i.e.\ the vertices along the top row of $D$) and forgetting all markings. Similarly, let $b_{D}$ be the full subgraph of $D$ obtained by taking the vertices labelled by $1', \dotsc , r'$ (i.e.\ the vertices along the bottom row of $D$) and forgetting all markings.  If $D$ is a standard marked Brauer diagram with $t$ through strings, then $t_{D}, b_{D} \in E_{r,t}$.  And, indeed, $D \mapsto t_{D}$ and $D \mapsto b_{D}$ defines bijections between the set of possible top and bottom rows, respectively, of standard marked Brauer diagrams in $B_{r}(\delta, \varepsilon)$ and the set $E_{r,t}$.

 Finally, given a standard marked Brauer diagram with $t$ through strings, $D$, let $\sigma_{D} \in \Sigma_{t}$ be defined as follows.  First, we ignore all cups and caps in $D$.  We may then relabel the vertices in the top row which are incident to a through string with $1, \dotsc , t$ and label the vertices in the bottom row which are incident to a through string with $1', \dotsc , t'$.  In this way we obtain a diagram $\sigma_{D} \in B_{t}(\delta, \varepsilon)$ consisting only of through strings and, hence, we can view it as an element of $\Sigma_{t}$.  

 For the next lemma we use the convenient convention that $k\Sigma_{0}=k$. 

\begin{theorem}\label{T:inflation}  Fix $0 \leq t \leq r$ with $t=r-2k$ for some non-negative integer $k$. Let $V_{t}$ be the $k$-vector space with basis the elements of $E_{r,t}$. Then $\ideal_{t}/\ideal_{t-2}$ is isomorphic as a $k$-algebra (possibly without unit) to the inflation 
\[
V_{t} \otimes V_{t} \otimes k\Sigma_{t}.
\]  The bilinear form will be given in the proof.
\end{theorem}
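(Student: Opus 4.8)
The plan is to define the claimed isomorphism directly on bases and then read the bilinear form off from the composition of diagrams. By \cref{L:basis} the standard marked Brauer diagrams with exactly $t$ through strings descend to a basis of $\ideal_{t}/\ideal_{t-2}$, since the standard diagrams with strictly fewer than $t$ through strings span $\ideal_{t-2}$. Each such standard diagram $D$ is uniquely recovered from the triple $(t_{D},b_{D},\sigma_{D})\in E_{r,t}\times E_{r,t}\times \Sigma_{t}$: every cup carries a bead and every cap an arrow by definition, and in standard form all arrows point right and all markings sit in canonical positions, so the beads and arrows carry no further information and $D$ is determined by its underlying unmarked Brauer diagram, which is in turn encoded by its top row, bottom row, and through-string permutation. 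I would therefore let $\Phi\colon \ideal_{t}/\ideal_{t-2}\to V_{t}\otimes V_{t}\otimes k\Sigma_{t}$ be the linear map $D\mapsto t_{D}\otimes b_{D}\otimes \sigma_{D}$; by the bijections recorded before the statement this is a bijection on basis elements, hence a $k$-linear isomorphism, and the dimension count $|E_{r,t}|^{2}\, t!$ agrees on both sides.

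Next I would compute the product of two standard basis diagrams $D_{1},D_{2}$ (each with $t$ through strings) in the quotient. Stacking $D_{1}$ above $D_{2}$ identifies the bottom row $b_{D_{1}}$ with the top row $t_{D_{2}}$, and the caps of $b_{D_{1}}$ overlay the cups of $t_{D_{2}}$ to give a disjoint union of paths and cycles on the $r$ middle vertices. If some path joins two through-string endpoints on the same side, the number of through strings drops and the composite lies in $\ideal_{t-2}$, hence is $0$ in the quotient. Otherwise the paths match the $t$ free vertices of $b_{D_{1}}$ bijectively to those of $t_{D_{2}}$, the cycles contribute a factor $\delta^{\ell}$, and—after applying the marking cancellation rules of \cref{SS:TwoOperations} along each path and cycle and re-standardizing via the relations of \cref{D:markedBrauerDiagrams}—the composite equals $\varepsilon^{c}\delta^{\ell}$ times the standard diagram with top row $t_{D_{1}}$, bottom row $b_{D_{2}}$, and through-string permutation $\sigma_{D_{1}}\tau\sigma_{D_{2}}$, where $\tau\in\Sigma_{t}$ is induced by the middle matching. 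The crucial point I would verify is that $\tau$, the loop count $\ell$, and the sign exponent $c$ depend only on the inner data $b_{D_{1}}$ and $t_{D_{2}}$, not on $t_{D_{1}},b_{D_{2}},\sigma_{D_{1}},\sigma_{D_{2}}$. This allows me to define a bilinear form $\langle-,-\rangle\colon V_{t}\otimes V_{t}\to k\Sigma_{t}$ by $\langle b_{D_{1}},t_{D_{2}}\rangle=\varepsilon^{c}\delta^{\ell}\tau$ (and $0$ when through strings are lost), extended bilinearly.

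With this form in hand the inflation multiplication
\[
(t_{D_{1}}\otimes b_{D_{1}}\otimes \sigma_{D_{1}})(t_{D_{2}}\otimes b_{D_{2}}\otimes \sigma_{D_{2}}) = t_{D_{1}}\otimes b_{D_{2}}\otimes \sigma_{D_{1}}\langle b_{D_{1}},t_{D_{2}}\rangle\sigma_{D_{2}}
\]
reproduces exactly $\Phi(D_{1}\circ D_{2})$ by the previous computation, so $\Phi$ is an algebra homomorphism, and being a linear isomorphism it is an algebra isomorphism. Associativity of the inflation then comes for free, since $\ideal_{t}/\ideal_{t-2}$ is associative as a subquotient of $B_{r}(\delta,\varepsilon)$.

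I expect the main obstacle to be the sign bookkeeping in the second step: confirming that the exponent $c$ of $\varepsilon$ produced by the cancellation rules together with re-standardization is a function of $b_{D_{1}}$ and $t_{D_{2}}$ alone. In the unmarked Brauer case of \cite{KX2} this independence is automatic because there are no signs; here one must check that sliding the beads and arrows into canonical position contributes a sign that localizes to the middle region, with the outer markings on the cups of $t_{D_{1}}$ and the caps of $b_{D_{2}}$ either contributing trivially or being absorbed into the relabeling recorded by $\sigma_{D_{1}}$ and $\sigma_{D_{2}}$. The remaining steps—the basis bijection and the matching of the two multiplication rules—are then routine.
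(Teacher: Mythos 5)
Your proposal is correct and takes essentially the same route as the paper: the paper's proof uses the identical bijection $\psi(D)=t_{D}\otimes b_{D}\otimes\sigma_{D}$ on standard diagrams with exactly $t$ through strings, sets the form to zero exactly when through strings are lost (paths of the second type), and otherwise reads the form off from diagram composition. The only difference is packaging: the paper pins down the scalar $\omega_{e,f}\in k\Sigma_{t}$ implicitly as the third tensor factor of $\psi\left(\psi^{-1}(e\otimes e\otimes \Id)\,\psi^{-1}(f\otimes f\otimes \Id)\right)$ and leaves the multiplicativity check to the reader, whereas you make explicit the one point that check rests on, namely that $\tau$, $\ell$, and the sign exponent $c$ are determined by the middle data alone (which holds because all cancellations occur in the contiguous band of latitudes occupied by the markings of $b_{D_{1}}$ and $t_{D_{2}}$, and the surviving outer markings are already in standard position).
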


\begin{proof}  We first define a linear map $\psi: \ideal_{t}/\ideal_{t-2} \to V_{t}\otimes V_{t} \otimes k\Sigma_{t}$. We note that $\ideal_{t}/\ideal_{t-2}$ has a basis given by the set of all standard marked Brauer diagrams which have precisely $t$ through strings.  Given such a marked Brauer diagram, $D$, define 
\[
\psi(D)= t_{D} \otimes b_{D} \otimes \sigma_{D}.
\] It is straightforward to see that this is a bijection between bases and, hence, is a vector space isomorphism.  

We now construct the bilinear form 
\[
\langle - , - \rangle: V_{t} \otimes V_{t} \to k\Sigma_{t}.
\]   Since $E_{r,t}$ provides a basis for $V_{t}$ it suffices to define $\langle e, f \rangle$ for $e,f \in E_{r,t}$.  To do so, we let $\Gamma$ denote the graph obtained from $e$ and $f$ by identifying the vertices of $e$ and $f$ which share the same label.  There are three types of paths in $\Gamma$.  The first are paths which form a closed loop in $\Gamma$.  The second are paths which do not form a closed loop and where the edges which begin and end the path are both edges in $e$ or are both edges in $f$.  The third are paths which do not form a closed loop and where the edge which begins the path lies in $e$ (or, resp.\ $f$) and the edge which ends the path lies in $f$ (resp.\ $e$).  A path consisting of a single edge is considered to be of the second type and an isolated vertex is considered to be a path of the third type.  Now we define the bilinear form by considering two possible cases:
\begin{itemize}
\item [Case I:] If $\Gamma$ contains any paths of the second type, then $\langle e, f \rangle = 0$.
\item [Case II:] If $\Gamma$ contains only closed loops and paths of the third type, then 
\[
\langle e, f \rangle =  \omega_{e,f},
\] where $\omega_{e,f} \in k\Sigma_{t}$ is defined by the equation 
\[
\psi \left(\psi^{-1}\left(e \otimes e \otimes \Id_{k\Sigma_{t}} \right)\psi^{-1}\left(f \otimes f \otimes \Id_{k\Sigma_{t}} \right) \right) = e \otimes f \otimes \omega_{e,f}.
\]
\end{itemize}

The reader can readily check that $\psi$ is an algebra map. In particular, the powers of $\delta$ and $\varepsilon$ one expects to see from the definition of composition in \cref{SS:TwoOperations} are accounted for in the definition of $\omega_{e,f}$.
\end{proof}

There is a marked version of \cite[Lemma 5.5]{KX1}.  Using this we can conclude that the marked Brauer algebra is an iterated inflation (recall that we don't require a compatible anti-involution).  We now consider the question of anti-involutions on the marked Brauer algebra.  In \cref{L:involutions} we saw that $B_{r}(\delta, \varepsilon)$ admits the anti-involution $x \mapsto x^{\bullet}$ corresponding to reflecting marked Brauer diagrams across the equator.  This anti-involution stabilizes the filtration of $B_{r}(\delta, \varepsilon)$ and, hence, defines an anti-involution on each $\ideal_{t}/\ideal_{t-2}$.  Under the map $\psi$ this corresponds to the anti-involution on $V_{t} \otimes V_{t} \otimes k\Sigma_{t}$ given on basis elements by 
\begin{equation}\label{E:gradedinvolution}
e \otimes f \otimes \sigma \mapsto \varepsilon^{k} f \otimes e \otimes \varepsilon^{\ell(\sigma)}\sigma^{-1},
\end{equation}
where $\ell(\sigma)$ is the length of the permutation $\sigma$.  When $\varepsilon=1$ this shows, as in \cite{KX2}, that we have a compatible anti-involution and that the ordinary Brauer algebra $B_{r}(\delta, 1)$ is a cellular algebra in the sense of \cite{GL}.  However, when $\varepsilon=-1$ this anti-involution does not satisfy the compatibility condition given in \cite[Section 3.1]{KX2}.  Indeed, a direct analysis shows that $B_{2}(0,-1)$ is not a cellular algebra for any possible anti-involution.  Nonetheless, \cref{E:gradedinvolution} can be understood as a graded version of the condition in \cite{KX2} and we expect that many of the techniques of cellular algebras will carry over to this setting.

\subsection{}

Using \cref{T:inflation} we can classify the simple modules for $B_{r}(\delta, \varepsilon)$ following \cite{KX2}.

\begin{theorem}\label{T:simplesoftheMBA}  Let $k$ be a field of characteristic $p \neq 2$ and write $r=2u+t$ with $t$ equal to $0$ or $1$. Then the simple modules of the marked Brauer algebra $B_{r}(\delta, \varepsilon)$ are parameterized by:
\begin{enumerate}
\item  The $p$-regular partitions of $r,r-2,r-4,\dotsc, t$ if $\delta\neq 0$ or if $\delta=0$ and $r$ is odd. 
\item The $p$-regular partitions of $r,r-2,r-4,\dotsc, 2$ if $\delta= 0$ and $r$ is even. 
\end{enumerate}
\end{theorem}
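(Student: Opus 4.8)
\textbf{The plan is to} combine \cref{T:inflation} with the representation theory of inflations developed by Koenig--Xi. The filtration \eqref{E:filtration} has successive subquotients $\ideal_{t}/\ideal_{t-2}$, each of which by \cref{T:inflation} is an inflation $V_{t}\otimes V_{t}\otimes k\Sigma_{t}$ of the symmetric group algebra $k\Sigma_{t}$ along the vector space $V_{t}$. The general principle—for which a marked version of \cite[Lemma 5.5]{KX1} is invoked in the excerpt—is that the simple modules of an algebra which is an iterated inflation are obtained, layer by layer, from the simple modules of the algebras being inflated, provided one discards those layers in which the inflation bilinear form is identically zero. So first I would recall that the simple $k\Sigma_{t}$-modules over a field of characteristic $p\neq 2$ are parameterized by the $p$-regular partitions of $t$; this is the classical result of James. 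Each such simple, when it survives, will contribute exactly one simple module to $B_{r}(\delta,\varepsilon)$, and these contributions are disjoint across the distinct layers because they live in distinct subquotients of the filtration.

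\textbf{The main point} is therefore to determine, for each $t$ running over $r, r-2, r-4, \dots$ down to $t\in\{0,1\}$, whether the layer $\ideal_{t}/\ideal_{t-2}$ contributes simple modules at all. By the inflation machinery, a layer contributes (namely, it contributes precisely the simples of $k\Sigma_{t}$, i.e.\ one per $p$-regular partition of $t$) if and only if the bilinear form $\langle-,-\rangle\colon V_{t}\otimes V_{t}\to k\Sigma_{t}$ constructed in the proof of \cref{T:inflation} is nonzero; a layer with identically zero form is a nilpotent (non-unital) ideal contributing nothing to the simple modules. So I would compute the form on the diagonal: taking $e=f\in E_{r,t}$, the graph $\Gamma$ obtained by gluing $e$ to itself consists only of closed loops and paths of the third type (isolated vertices), so we are in Case II and $\langle e,e\rangle=\omega_{e,e}$, where the closed loops produced contribute a factor of $\delta^{k}$ with $k=(r-t)/2$. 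The upshot is that $\langle e,e\rangle$ is a nonzero scalar multiple of $\Id_{k\Sigma_{t}}$ exactly when $\delta^{k}\neq 0$, i.e.\ when either $k=0$ (the top layer $t=r$, where there are no loops) or $\delta\neq 0$.

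\textbf{This computation drives the case split.} When $\delta\neq 0$, every layer has $\delta^{k}\neq 0$, so the form is nonzero in each layer and every $t$ in the list $r, r-2, \dots, t_{0}$ contributes its $p$-regular partitions; this gives case (1) when $\delta\neq 0$. When $\delta=0$, the only layer with a nonzero diagonal form among those with $k\geq 1$ must be examined more carefully—but in fact for $k\geq 1$ one has $\delta^{k}=0$, so a priori every layer below the top might die. The resolution is that the form being nonzero does \emph{not} require the diagonal entry to be nonzero; off-diagonal pairs $e\neq f$ can still produce a nonzero $\omega_{e,f}$ precisely when the glued graph $\Gamma$ has \emph{no} closed loops (so no factor of $\delta$ arises) and lies in Case II. Such loop-free gluings exist for every $t$ with $r-t\geq 2$ as long as $t\geq 1$, which is why when $\delta=0$ and $r$ is odd the full list down to $t=1$ survives, but when $\delta=0$ and $r$ is even the bottom layer $t=0$ (where $k\Sigma_{0}=k$ and the only graphs are perfect matchings, whose self-gluing always yields a closed loop and whose cross-gluing likewise cannot avoid a loop when no through strings remain) has identically zero form and hence contributes nothing—forcing the list to stop at $t=2$.

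\textbf{The hard part} will be the last one: verifying carefully, for $\delta=0$, exactly which layers have an identically zero bilinear form. The diagonal computation is immediate, but the nonvanishing of the form in a given layer hinges on the existence of a pair $e,f\in E_{r,t}$ whose glued graph $\Gamma$ avoids closed loops entirely while avoiding paths of the second type, and this is a genuinely combinatorial condition on matchings that behaves differently for $t=0$ versus $t\geq 1$. I would handle this by translating the Case~I/Case~II dichotomy of \cref{T:inflation} into a statement purely about the graph $\Gamma$ and then exhibiting explicit loop-free pairs when $t\geq 1$ and $r-t\geq 2$, while showing no such pair exists when $t=0$; once this combinatorial lemma is in hand, the parameterization follows immediately by assembling the surviving layers and appending the classical count of $p$-regular partitions of each surviving $t$.
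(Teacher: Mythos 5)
Your proposal is correct and takes essentially the same approach as the paper: the paper's proof likewise reduces along the filtration \eqref{E:filtration} to the inflation layers $V_{t}\otimes V_{t}\otimes k\Sigma_{t}$ of \cref{T:inflation} and then appeals to \cite[Section 3.2 and Corollary 5.8]{KX2}, whose content is precisely the layer-by-layer analysis you carry out by hand (the diagonal value $\delta^{k}$, loop-free off-diagonal gluings when $t\geq 1$, and the identically zero form in the $t=0$ layer when $\delta=0$ and $r$ is even). One small refinement: the Koenig--Xi criterion is that a simple $k\Sigma_{t}$-module $L$ survives iff the image of the bilinear form acts nontrivially on $L$, not merely that the form is nonzero, but this is harmless here since the nonzero values you exhibit ($\delta^{k}\Id$ on the diagonal and $\omega_{e,f}$, which is $\pm\sigma$ for a permutation $\sigma$, off the diagonal) are units of $k\Sigma_{t}$, so nonvanishing of the form does force every simple to survive.
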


\begin{proof}  We first make the following observation.   Let $A$ be a not necessarily unital finite dimensional $k$-algebra, let $I$ be a two-sided ideal of $A$, and assume $A$ admits an anti-involution which stabilizes $I$.   It is then straightforward to verify that the isomorphism classes of simple modules of $A$ are in bijection with the disjoint union of the isomorphism classes of simple modules for $I$ and $A/I$.  This along with \cref{E:filtration} implies that it suffices to classify the simple modules of each $V_{t} \otimes V_{t} \otimes k\Sigma_{t}$ in \cref{T:inflation}. This is done in \cite[Section 3.2 and Corollary 5.8]{KX2} for the ordinary Brauer algebra $B_{r}(\delta, 1)$ and the same analysis applies to the marked Brauer algebra in general. 
\end{proof}

\section{Representations of Lie Superalgebras}\label{S:LSAs}

\subsection{}\label{SS:LSAprelim} We now introduce our original motivation for considering the marked Brauer category.  
Recall from \cref{SS:prelims} that tensor products of superspaces and linear maps between superspaces have a natural grading.   In particular, we say a bilinear form on a superspace $V$ is even or odd if the corresponding linear map $V \otimes V \to k$ is even or odd, respectively.  A bilinear form on a superspace is said to be \emph{symmetric} if it is symmetric in the graded sense; that is, if $(x,y) = (-1)^{\bar{x} \cdot \bar{y}} (y,x)$ for all homogeneous $x,y \in V$.  Note that an anti-symmetric bilinear form on a superspace $V$ corresponds to a symmetric bilinear form on the superspace obtained by switching the grading on $V$.  Consequently there is no loss in only considering symmetric bilinear forms in what follows.  As usual a bilinear form is said to be nondegenerate if for every nonzero $x \in V$ there exists a $y \in V$ such that $(x,y) \neq 0$.

Throughout we will assume we have a fixed superspace $V$ of dimension $m|n$ with a homogeneous bilinear form, $b=(-,-)$, which is nondegenerate and supersymmetric. Given such a bilinear form we define a Lie superalgebra $\fg (V,b) \subseteq \gl (V)= \End_{k}(V)$ consisting of all linear maps which preserve the bilinear form:

\[
\fg(V,b) =\left\{A \in \gl (V) \mid (Ax,y) + (-1)^{\bar{A} \bar{x}}(x,Ay)=0 \right\}.
\]  Note that the defining condition for $\fg (V,b)$ is given only for homogenous elements of $\gl (V)$ and $V$; the general condition can be obtained using linearity.

On the one hand, when the bilinear form is even $\fg(V,b)$ is isomorphic to the orthosymplectic Lie algebra $\mathfrak{osp}(V)$.  In particular, if the dimension of $V$ is $m|0$, then the bilinear form is a nondegenerate symmetric bilinear form in the non-super sense and $\fg(V,b)$ is isomorphic to the orthogonal Lie algebra.  If the dimension of $V$ is $0|n$, then the bilinear form is a nondegenerate skew-symmetric bilinear form in the non-super sense and $\fg(V,b)$ is the symplectic Lie algebra. On the other hand, when the bilinear form is odd the dimension of $V$ is necessarily $n|n$ and $\fg (V,b)$ is isomorphic to the Lie superalgebra $\fp (n)$.  Matrix realizations of these Lie superalgebras can be found in, for example, \cite{Kac}.

Set $\fg =\fg (V,b)$.  A $\fg$-module is a superspace $W$ with an action of $\fg$ which respects the grading in the sense that $\fg_{r}.M_{s} \subseteq M_{r+s}$ for all $r,s \in \Z_{2}$ and which satisfies graded versions of the axioms for a Lie algebra module.   See \cite{Kac} for the basics of Lie superalgebras and their representations.  In particular, $V$ itself is naturally a $\fg$-module.

We choose to write our $\fg$-module homomorphisms on the right.  Consequently,  
\[
\Hom_{\fg}(W,U) = \left\{f \in \Hom_{k}(W,U) \mid (x.w)f = x.(w)f \text{ for all $x \in \fg$ and $w \in W$}  \right\}.
\]  These $\Hom$-spaces are naturally $\Z_{2}$-graded by declaring $f$ to be of degree $r \in \Z_{2}$ if $(W_{s})f \subseteq W_{r+s}$ for all $s \in \Z_{2}$.
The universal enveloping superalgebra of $\fg$ admits a coproduct so we may consider tensor products of finite-dimensional $\fg$-modules. 

Let $\TT_{\fg}(V)$ be the full subcategory of $\fg$-modules consisting of all objects of the form $V^{\otimes a}$ for some $a=0, 1, 2, 3, \dotsc$ (where $V^{\otimes 0} = k$ by convention).  We let $\widehat{\TT}_{\fg}(V)$ be the full subcategory of $\fg$-modules consisting of all objects which are isomorphic to a direct summand of a finite direct sum of objects from $\TT_{\fg}(V)$.  Both categories are easily seen to be symmetric tensor categories with $\widehat{\TT}_{\fg}(V)$ also closed under retracts and finite direct sums.

\subsection{} Given a superspace $V$ of dimension $m|n$ which has a nondegenerate, symmetric, homogeneous bilinear form $b=(-,-)$, set $\varepsilon = (-1)^{\bar{b}}$ and $\delta=m-n$, the superdimension of $V$.  Note that if the bilinear form is odd, then $\varepsilon=-1$ and the non-degeneracy of the bilinear form implies that $m=n$ and $\delta=0$.

Fix a homogeneous basis $v_{1}, \dotsc , v_{m+n}$ for $V$ and let $v_{1}^{*}, \dotsc , v_{m+n}^{*}$ be the homogeneous basis for $V$ defined by $(v_{i}, v_{j}^{*})= \delta_{i,j}$.  We can then define 
\[
\sum_{i=1}^{m+n} (-1)^{\bar{v}_{i}} v_{i} \otimes v_{i}^{*} \in  V \otimes V.
\]  It is straightforward to verify that this vector is independent of the choice of homogeneous basis and that $\fg$ acts trivially on it.  Consequently we can define a $\fg$-module homomorphism $k \xrightarrow{c} V \otimes V$ by 
\begin{equation}\label{E:trivialvector}
(1)c=\sum_{i=1}^{m+n} (-1)^{\bar{v}_{i}} v_{i} \otimes v_{i}^{*}.
\end{equation}

We also have the $\fg$-module homomorphisms $V \xrightarrow{\Id_{V}} V$, $V \otimes V \xrightarrow{b} k$, and $V \otimes V \xrightarrow{s} V \otimes V$ given on our homogeneous basis by 
\begin{align*}
(v_{i})\Id_{V} &= v_{i},\\
(v_{i}\otimes v_{j})b &= (v_{i}, v_{j}),\\
(v_{i}\otimes v_{j})s &= (-1)^{\bar{v}_{i} \bar{v}_{j}} v_{j} \otimes v_{i}.
\end{align*}

\begin{theorem}  Given a superspace $V$ of dimension $m|n$ which has a nondegenerate, symmetric, homogeneous bilinear form $b=(-,-)$, set $\varepsilon = (-1)^{\bar{b}}$ and $\delta=m-n$. Then there exists a unique covariant functor of $k$-linear tensor categories 
\[
F: \BB(\delta, \varepsilon) \to T_{\fg}(V)
\] given on objects by 
\[
F([a]) = V^{\otimes a} 
\] and on the generators $I, X, \cup$, and $\cap$ by: 
\[
F(I) = \Id_{V}, \; \; 
F(X) = s, \; \; 
F(\cup) = b, \; \; 
F(\cap) = c.
\] 
 Furthermore, this functor preserves the $\Z_{2}$-grading of the $\Hom$-spaces.
\end{theorem}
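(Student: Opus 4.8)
The plan is to use the presentation of $\BB(\delta, \varepsilon)$ as a strict $k$-linear tensor category given in \cref{T:generatorsandrelations}. Since that theorem tells us that $\BB(\delta, \varepsilon)$ is freely generated by $I, X, \cup, \cap$ subject to the sixteen listed relations, the universal property of such a presentation guarantees that to define the functor $F$ it suffices to specify its values on the four generators (together with its values on objects, namely $F([a]) = V^{\otimes a}$) and then to verify that the images $\Id_V, s, b, c$ satisfy in $T_{\fg}(V)$ each of the relations holding in $\BB(\delta, \varepsilon)$. Once the relations are checked, both existence and uniqueness follow immediately, since any two functors agreeing on generators of a strict tensor category defined by generators and relations must coincide. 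So the proof reduces entirely to a relation-by-relation verification in $T_{\fg}(V)$.

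First I would record that each of $\Id_V$, $s$, $b$, $c$ is genuinely a $\fg$-module homomorphism: for $\Id_V$ and $s$ this is clear (the latter because $s$ is the supersymmetry braiding, which is $\fg$-equivariant since the coproduct on $U(\fg)$ is cocommutative up to the braiding), for $b$ this is exactly the defining invariance condition of $\fg = \fg(V,b)$, and for $c$ it was verified in the text just before the statement that $\fg$ annihilates the distinguished vector $(1)c$. I would also note that $F$ preserves the $\Z_2$-grading on $\Hom$-spaces by construction: $I$ and $X$ are even and map to the even maps $\Id_V$ and $s$, while $\cup$ and $\cap$ are odd and map to $b$ and $c$, whose parities are $\bar{b}$; here the bookkeeping works precisely because $\varepsilon = (-1)^{\bar{b}}$, so the sign conventions in $\BB(\delta, \varepsilon)$ match the Koszul signs arising in $T_{\fg}(V)$.

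Next I would carry out the relation checks. The purely symmetric-group relations ($s_i^2 = 1$ type relations: $X \circ X = I \otimes I$, the braid relation, and the commuting relations) hold because $s$ is a symmetric braiding on $V \otimes V$ and these are standard identities for the symmetric monoidal structure. The relations involving only $\cup$ and $\cap$ that produce a bare $\delta$ (namely $\cap \circ \cup = \delta$) reduce to computing $\sum_i (-1)^{\bar{v}_i}(v_i, v_i^*)$, which equals the superdimension $m - n = \delta$. The relations pairing a cup and a cap against an identity (the ``zig-zag'' or rigidity relations $\cap \otimes I \circ I \otimes \cup = I$ and $I \otimes \cap \circ \cup \otimes I = \varepsilon I$) are exactly the statement that $b$ and $c$ exhibit $V$ as self-dual up to the sign $\varepsilon$; these I would verify by feeding a basis vector through and using $(v_i, v_j^*) = \delta_{i,j}$ together with the graded symmetry of the form to extract the factor $\varepsilon = (-1)^{\bar b}$. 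The remaining relations (those carrying powers of $\varepsilon$, such as $\cap \circ X = \varepsilon \cap$, the relations relating $\cup \otimes I \otimes I \circ \cup$ to its transpose, and the mixed braiding-cup-cap relations) are each a short computation in which the Koszul sign produced by commuting homogeneous vectors past one another or by the graded symmetry of $b$ contributes the predicted power of $\varepsilon$.

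The main obstacle I anticipate is purely the sign bookkeeping rather than any conceptual difficulty: every verification requires tracking the Koszul signs $(-1)^{\bar x \bar y}$ that arise whenever homogeneous elements are interchanged, and making sure that in the odd case ($\varepsilon = -1$) these accumulate into exactly the powers of $\varepsilon$ that appear in the relations of \cref{T:generatorsandrelations}. The key subtlety is that $c$ carries the weighting $(-1)^{\bar{v}_i}$ in its definition precisely so that the rigidity and symmetry relations come out with the correct sign; I would therefore be most careful on the relations $I \otimes \cap \circ \cup \otimes I = \varepsilon I$ and $I \otimes I \otimes \cup \circ \cup = \varepsilon(\cup \otimes I \otimes I \circ \cup)$, where a miscount of signs would be easy but where the factor $\varepsilon = (-1)^{\bar b}$ must emerge from the graded symmetry of the form and the parity of the dual-basis vectors. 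Since all of these are finite, explicit computations on basis vectors, I would present a representative case or two and indicate that the remainder are analogous.
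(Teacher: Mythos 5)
Your proposal is correct and follows essentially the same route as the paper's proof: reduce everything to the presentation in \cref{T:generatorsandrelations} via its universal property, then verify the sixteen relations for $\Id_V$, $s$, $b$, $c$ in $T_{\fg}(V)$ by explicit computation on homogeneous basis vectors, with the powers of $\varepsilon=(-1)^{\bar b}$ emerging from Koszul signs and the parity shift $\bar{v}_i^* = \bar{v}_i + \bar{b}$ of the dual basis. The paper leaves the relation checks to the reader exactly as you propose, recording only the same computational hint you identified (the dual-basis parities and the sign $(-1)^{\bar{v}_i \bar{v}_i^*}$ arising when the roles of $v_i$ and $v_i^*$ are exchanged in the invariant vector), so your extra attention to the rigidity relations is a faithful elaboration rather than a deviation.
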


\begin{proof} By \cref{T:generatorsandrelations} it suffices to verify that the relations given in \cref{T:generatorsandrelations} are satisfied by the maps in $T_{\fg}(V)$ given by their images under the putative functor.   We leave this elementary verification to the reader.  It is useful for the calculations which involve the map $c$ to note that $\bar{v}_{i}^{*}= \bar{v}_{i} + \bar{b}$ and that we can construct the vector in \cref{E:trivialvector} by instead using the homogeneous basis $v_{1}^{*}, \dotsc , v_{m+n}^{*}$.  When we do so the vector dual to $v_{i}^{*}$ with respect to the bilinear form is $(-1)^{\bar{v}_{i}\bar {v}_{i}^{*}}v_{i}$. 
\end{proof}

 More generally, if we let $\widehat{\BB}(\delta, \varepsilon)$ denote the Karoubi envelope of the additive envelope of $\BB(\delta, \varepsilon)$, then the universal property of this category implies that $F$ induces a unique functor 
\[
\widehat{F}: \widehat{\BB}(\delta, \varepsilon) \to \widehat{\TT}_{\fg}(V).
\]

\subsection{} Recall that the maps $\rightupJ_{b}$,  $\leftupJ_{b}$,  $\rightdownJ_{a}$, and  $\leftdownJ_{a}$ are given by compositions and tensor products of maps (i.e. marked Brauer diagrams) in $\BB (\delta, \varepsilon)$. Since $F$ is a tensor functor we can define maps between the corresponding $\Hom$-spaces in $T_{\fg}(V)$ by applying $F$ to the constituent marked Brauer diagrams to obtain module homomorphisms and then applying the same sequence of compositions and tensor products.  We call the resulting maps by the same name.  Therefore, given nonnegative integers $a,b,r,s$ with $a \leq r$ and $b \leq s$, we have homogeneous maps of superspaces:
\begin{align}\label{E:JHommaps}
\rightupJ_{b}&:\Hom_{\fg}(V^{\otimes r}, V^{\otimes s})\to \Hom_{\fg}(V^{\otimes r+b}, V^{\otimes s-b}) \notag \\
\leftupJ_{b}&:\Hom_{\fg}(V^{\otimes r}, V^{\otimes s})\to \Hom_{\fg}(V^{\otimes r+b}, V^{\otimes s-b}) \\
\rightdownJ_{a}&:\Hom_{\fg}(V^{\otimes r}, V^{\otimes s}) \to \Hom_{\fg}(V^{\otimes r-a}, V^{\otimes s+a}) \notag \\
\leftdownJ_{a}&:\Hom_{\fg}(V^{\otimes r}, V^{\otimes s}) \to \Hom_{\fg}(V^{\otimes r-a}, V^{\otimes s+a}).  \notag
\end{align}  
It is immediate that if we let $\mathbb{J}$ stand for any one of $\rightupJ_{b}$, $\leftupJ_{b}$, $\rightdownJ_{a}$, or $\leftdownJ_{a}$, then we have
\[
\mathbb{J} \circ F = F \circ \mathbb{J}.
\] Furthermore, applying the functor $F$ to \cref{L:Jinverses} yields 
\begin{equation*}
\rightupJ_{a} \circ \rightdownJ_{a} =\rightdownJ_{b} \circ \rightupJ_{b} = \leftupJ_{a} \circ \leftdownJ_{a} = \leftdownJ_{b} \circ \leftupJ_{b} = \Id_{\Hom_{\fg}(V^{\otimes r}, V^{\otimes s})}.
\end{equation*}  In particular, each map is a homogeneous superspace isomorphism.

\subsection{}  Given nonnegative integers $r$ and $s$ the functor $F$ provides a grading preserving linear map
\begin{equation}\label{E:FonHoms}
F:B_{r,s}(\delta, \varepsilon) \to \Hom_{\fg}\left(V^{\otimes r}, V^{\otimes s} \right).
\end{equation}
In order to use the marked Brauer category to study representations of $\fg$ it is useful to know when this map is injective and surjective; that is, when the functor is faithful and full.  This question is still open in general, but various special cases are known (e.g. \cite{stroppel, HX, LZ0, LZ2, LZ3, Moon}. Addressing this question in full generality is beyond the scope of this paper.  For the remainder of this section we assume $k=\mathbb{C}$  and satisfy ourselves with the following results.


\begin{theorem}\label{T:faithful} Let $r$ and $s$ be nonnegative integers and let $V$ be a superspace of dimension $m|n$ with a homogeneous nondegenerate bilinear form.  We then have the following:
\begin{enumerate}
\item  If the bilinear form is even, then the map in \cref{E:FonHoms} is injective whenever $r+s \leq m + n/2$. 
\item   If the bilinear form is odd, then the map in \cref{E:FonHoms} is injective whenever $r+s \leq m+n$.  

\end{enumerate}
\end{theorem}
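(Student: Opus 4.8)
The plan is to first strip off the bottom row using the isomorphisms of \cref{SS:duality}, and then to prove injectivity on the resulting ``cups only'' space by a separating-vector argument.

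First I would reduce to the case $s=0$. Taking $b=s$ in \cref{E:Jmaps}, \cref{L:Jinverses} shows that $\rightupJ_{s}\colon B_{r,s}(\delta,\varepsilon)\to B_{r+s,0}(\delta,\varepsilon)$ is an isomorphism, while the same composite of maps on the representation side gives an isomorphism $\rightupJ_{s}\colon \Hom_{\fg}(V^{\otimes r},V^{\otimes s})\to \Hom_{\fg}(V^{\otimes r+s},k)$; moreover $\rightupJ_{s}\circ F=F\circ \rightupJ_{s}$. Consequently $F$ is injective on $B_{r,s}(\delta,\varepsilon)$ if and only if it is injective on $B_{N,0}(\delta,\varepsilon)$, where $N=r+s$ (and we may assume $N$ is even, since otherwise both spaces are zero). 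So it suffices to prove that
\[
F\colon B_{N,0}(\delta,\varepsilon)\to \Hom_{\fg}(V^{\otimes N},k)
\]
is injective.

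By \cref{L:basis} the space $B_{N,0}(\delta,\varepsilon)$ has as a basis the standard marked Brauer diagrams with empty bottom row, and forgetting the markings identifies these with the perfect matchings of $\{1,\dots,N\}$. For such a diagram $D$ every edge is a cup, so $F(D)\colon V^{\otimes N}\to k$ is a composite of copies of $F(\cup)=b$ routed by the braidings $F(X)=s$, and on a pure basis tensor it has the form
\[
F(D)\big(v_{i_{1}}\otimes\cdots\otimes v_{i_{N}}\big)=\pm\prod_{\{a,b\}\in D}(v_{i_{a}},v_{i_{b}}),
\]
the sign being a nonzero power of $-1$ coming from the Koszul rule and the relations of \cref{D:markedBrauerDiagrams}. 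Thus whether this scalar vanishes depends only on whether every edge of $D$ receives a non-orthogonal pair of vectors. The heart of the argument is then to produce, for each matching $D_{0}$, a decomposable tensor $t_{D_{0}}$ of homogeneous basis vectors with $F(D)(t_{D_{0}})\neq 0$ exactly when $D=D_{0}$: one assigns to each edge of $D_{0}$ its own \emph{dual pair} of basis vectors on which $b$ is nonzero, with distinct edges using disjoint pairs. Any edge of a competing diagram $D$ is then forced to pair two members of a single dual pair, which forces $D=D_{0}$; hence the matrix $\big(F(D)(t_{D_{0}})\big)_{D,D_{0}}$ is, up to nonzero scalars, the identity, and the functionals $\{F(D)\}$ are linearly independent.

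This construction succeeds as soon as the form admits $N/2$ pairwise disjoint dual pairs, and the two cases of the theorem come from counting these. When $b$ is odd we have $m=n$ and $b$ restricts to a perfect pairing between $V_{\0}$ and $V_{\1}$, producing the $n$ dual pairs $\{v_{i},w_{i}\}$ and the bound $r+s\le m+n$. When $b$ is even it decomposes into a symmetric form on $V_{\0}$ and a symplectic form on $V_{\1}$, and counting the disjoint dual pairs built from the orthogonal and symplectic summands produces the bound $r+s\le m+n/2$. I expect the main obstacle to be precisely this bookkeeping: on the one hand, tracking the signs when $\varepsilon=-1$, where the grading of \cref{R:grading} is nontrivial and the relations of \cref{D:markedBrauerDiagrams} genuinely contribute factors of $-1$, so that one must verify the diagonal entries $F(D_{0})(t_{D_{0}})$ are not accidentally annihilated (this is the phenomenon underlying Moon's delicate $\fp(n)$ calculations); and on the other hand, carrying out the count of maximal families of disjoint dual pairs carefully enough in each of the orthogonal, symplectic, and odd settings to arrive at the stated numerical bounds.
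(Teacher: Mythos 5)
Your proposal is correct and follows essentially the same route as the paper: reduce to the case of an empty bottom row via the isomorphisms of \cref{L:Jinverses} (which commute with $F$), then separate the diagram functionals by evaluating on pure tensors built from pairwise disjoint dual pairs (orthonormal vectors on $V_{\0}$ together with symplectic pairs on $V_{\1}$ in the even case, and the pairs $(v_{i},v_{i}^{*})$ in the odd case). The only cosmetic differences are that you construct one separating vector per matching while the paper uses the single vector $u_{1}\otimes\dotsb\otimes u_{t}$ adapted to $D_{0}=\cup^{\otimes t}$ and first composes an arbitrary nonzero element with a suitable $\sigma\in\Sigma_{r}$ to force $\alpha_{D_{0}}\neq 0$, and that the sign-cancellation worry you flag is moot, since $F(D)$ applied to a pure tensor of homogeneous basis vectors produces a single monomial $\pm\prod_{\{a,b\}\in D}(v_{i_{a}},v_{i_{b}})$, so the diagonal entries are automatically $\pm 1$ (your dual-pair count in the even case in fact proves injectivity in a range larger than the stated $r+s\leq m+n/2$, consistent with the paper's remark that its bound is not sharp).
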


\begin{proof} 
Using the maps given in \cref{E:JHommaps} we see it suffices to consider the case when $r=2t$ and $s=0$.  Recall that we can identify the group algebra of the symmetric group on $r$ letters as a subalgebra of $B_{r,r}(\delta, \varepsilon)$.   For any $\sigma \in \Sigma_{r}$ we then have a vector space automorphism  of $B_{r,0}(\delta, \varepsilon)$ given by $x \mapsto \sigma \circ x$. Furthermore, functoriality implies that this map stabilizes the kernel of $F$.  

Now let $z \in B_{r,0}(\delta, \varepsilon)$ be nonzero.  If we write $z = \sum_{D} \alpha_{D}D$, where the sum is over our basis of standard marked Brauer diagrams, then $\alpha_{D}$ is nonzero for some $D$.   The discussion in the previous paragraph  implies we may compose with an appropriate $\sigma \in \Sigma_{r}$ to assume without loss of generality that in fact $\alpha_{D_{0}} \neq 0$ where $D_{0}:=\cup^{\otimes t}$.

We first consider the case when the bilinear form is even.  In this case it restricts to a symmetric nondegenerate bilinear form on $V_{\0}$ and an anti-symmetric nondegenerate bilinear form on $V_{\1}$.  Consequently, we can choose an orthonormal basis $v_{1}, \dotsc , v_{m}$ for $V_{\0}$ and a basis $w_{1}, \dotsc , w_{p}, w^{*}_{1}, \dotsc , w^{*}_{p}$ for $V_{\1}$ such that $(w_{i}, w_{j})=(w^{*}_{i}, w^{*}_{j})=0$ and $(w_{i}, w^{*}_{j})=\delta_{i,j}$ for all $1 \leq i,j \leq p$.  We then have a basis for $V^{\otimes r}$ consisting of the pure tensors of these vectors.  In particular, if we set 
\begin{equation*}
u_{k}= \begin{cases} v_{k} \otimes v_{k}, & 1 \leq k \leq m; \\
                     w_{m-k} \otimes w^{*}_{m-k}, & m+1 \leq k \leq m+p,
\end{cases}
\end{equation*} then $u_{1} \otimes \dotsb \otimes u_{t}$ is a basis element of $V^{\otimes r}$.  Now we evaluate $F(z)$ on this vector and obtain 
\[
\left(u_{1} \otimes \dotsb \otimes u_{t} \right)F(z)  = \sum_{D} \alpha_{D}\left(u_{1} \otimes \dotsb \otimes u_{t} \right)F(D).
\]  However a straightforward calculation shows that 

\begin{equation*}
\left(u_{1} \otimes \dotsb \otimes u_{t} \right)F(D) = \begin{cases} \pm 1, & \text{ if $D = D_{0}$}; \\
                                                                      0,     & \text{otherwise}.
\end{cases}
\end{equation*}
 Consequently $F(z)$ is nonzero and, hence, $F$ is injective.

When the bilinear form is odd we argue just as above except that in this case $m=n$ and we choose a basis $v_{1}, \dotsc , v_{n}$ for $V_{\0}$ and $v^{*}_{1}, \dotsc , v^{*}_{n}$ such that $(v_{i}, v_{j})=(v^{*}_{i}, v^{*}_{j})=0$ and $(v_{i}, v^{*}_{j}) = \delta_{i,j}$ for $1 \leq i, j \leq n$.  We then set $u_{k}=v_{k}\otimes v^{*}_{k}$ and evaluate $F(z)$ on the vector $u_{1} \otimes \dotsb \otimes u_{t}$ as before.
\end{proof}

While the proof given above is uniform in its approach, it does not give the best possible results.  A stronger bound can be found in \cite[Corollary 5.5]{LZ3} when the bilinear form is even.  When the bilinear form is odd our bound agrees with \cite[Theorem 4.1]{Moon}.  


\begin{theorem}\label{T:full} Let $r$ and $s$ be nonnegative integers and let $V$ be a superspace of dimension $m|n$ with a homogeneous nondegenerate bilinear form.  We then have the following:
\begin{enumerate}
\item  If the bilinear form is even, then the map in \cref{E:FonHoms} is surjective whenever $r+s < m(n+1)/2$. 
\item   If the bilinear form is odd, then the map in \cref{E:FonHoms} is surjective whenever $r+s \leq m+n$.  
\end{enumerate}
\end{theorem}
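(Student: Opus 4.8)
The plan is to reduce to the case $s=0$ and then recognize the statement as a First Fundamental Theorem of invariant theory for $\fg$. Exactly as in the proof of \cref{T:faithful}, the maps $\rightupJ_{b}$, $\leftupJ_{b}$, $\rightdownJ_{a}$, $\leftdownJ_{a}$ of \cref{E:JHommaps} are isomorphisms of $\Hom$-spaces which commute with $F$; hence by \cref{L:Jinverses} the map $F$ is surjective on $\Hom_{\fg}(V^{\otimes r}, V^{\otimes s})$ if and only if it is surjective on $\Hom_{\fg}(V^{\otimes N}, k)$, where $N=r+s$ (which we may assume even, since otherwise both sides vanish). Now $B_{N,0}(\delta,\varepsilon)$ consists of all-cup diagrams, and $F$ sends such a diagram to a complete contraction built from $F(\cup)=b$; thus $F(B_{N,0}(\delta,\varepsilon))$ is exactly the span of the complete $b$-contractions $V^{\otimes N}\to k$ inside $\Hom_{\fg}(V^{\otimes N},k)$. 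Equivalently, under the identification $V\cong V^{*}$ afforded by the nondegenerate form, the claim is that every $\fg$-invariant tensor in $V^{\otimes N}$ is a linear combination of products of the canonical invariant $F(\cap)=c$ permuted by $\Sigma_{N}$.

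To establish this in the stated range I would compute the invariants by first restricting to the even part $\fg_{\0}$. In the even case $\fg_{\0}\cong\mathfrak{so}(m)\oplus\mathfrak{sp}(n)$, and in the odd case $\fg_{\0}\cong\gl(n)$ with $V\cong U\oplus U^{*}$ for the natural $\gl(n)$-module $U$. In either situation the $\fg_{\0}$-invariants in $V^{\otimes N}$ are governed by classical (respectively mixed) Schur--Weyl duality and are spanned by contraction diagrams provided the dimension of $V$ is large compared with $N$; the inequalities $r+s< m(n+1)/2$ and $r+s\le m+n$ are the thresholds under which these spanning statements hold. Imposing the remaining equivariance under the odd part of $\fg$ then cuts this space down to the genuinely $\fg$-invariant combinations, and one verifies that these are precisely the ones realized by marked Brauer diagrams, with the signs matching the powers of $\varepsilon$ dictated by \cref{D:markedBrauerDiagrams}. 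For the even form this reproduces the orthosymplectic First Fundamental Theorem, and for the odd form it agrees with Moon's periplectic computation.

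The hard part will be the odd case. There $\fg=\fp(n)$ is not semisimple and its finite-dimensional representation theory is poorly understood, so $V^{\otimes N}$ admits no clean multiplicity-free decomposition; the passage from $\gl(n)$-invariants to $\fp(n)$-invariants must instead be carried out by hand, tracking the action of the odd root vectors and the $\varepsilon=-1$ signs. A secondary, more technical, obstacle is that the elementary argument cannot simply borrow injectivity from \cref{T:faithful}: the surjectivity interval here is strictly larger than the injectivity interval there, so a naive count of $\dim\Hom_{\fg}(V^{\otimes N},k)$ against $\dim B_{N,0}(\delta,\varepsilon)$ does not suffice, and the spanning of invariants by contractions must be proved directly throughout the claimed range. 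As in \cref{T:faithful}, the resulting even bound is not expected to be optimal, stronger fullness results being available elsewhere in the literature.
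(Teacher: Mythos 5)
Your reduction via the isomorphisms of \cref{E:JHommaps} is sound and matches the paper's first step, but after that the proposal stops short of a proof: the entire content of the theorem --- that $\fg$-invariants of $V^{\otimes N}$, $N=r+s$, are spanned by complete contractions in the stated range --- is precisely the step you defer (``carried out by hand,'' ``I would compute\dots''), and no argument is ever supplied. The paper does not re-derive this statement; it reduces to $r=0$ in the even case and cites the First Fundamental Theorem of Lehrer--Zhang together with the discussion in Section~7.3 of that paper (which handles exactly the superalgebra-versus-supergroup discrepancy), or alternatively Ehrig--Stroppel, and in the odd case reduces to $r=s$ and cites Moon's Theorem~4.5, identifying $F$ with Moon's map $\Psi$ on $B_{r,r}(0,-1)\cong A_r$. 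Moreover, two assertions propping up your strategy are incorrect. First, the inequalities do not come from classical or mixed Schur--Weyl spanning thresholds: the classical FFT for $O(m)$, $Sp(n)$ and the mixed FFT for $GL(n)$ span invariants by contractions in \emph{all} degrees, dimension bounds being needed only for linear independence. The bound $r+s<m(n+1)/2$ in the even case reflects the fact that Lie (super)algebra invariants can strictly exceed supergroup invariants --- e.g.\ $\mathfrak{so}(m)$ has volume-form (Pfaffian-type) invariants in degree $m$ that are not contractions --- and your plan of first taking $\fg_{\0}\cong\mathfrak{so}(m)\oplus\mathfrak{sp}(n)$-invariants runs head-on into these extra invariants; cutting them out by odd-part equivariance is the hard step and you give no mechanism for it. Second, your claim that the surjectivity interval is strictly larger than the injectivity interval of \cref{T:faithful} is false: for an even form with $n=0$ the surjectivity bound $r+s<m/2$ is \emph{smaller} than the injectivity bound $r+s\le m$, and in the odd case the two bounds coincide at $r+s\le m+n$.

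You also assert without proof that both sides of \cref{E:FonHoms} vanish when $r+s$ is odd. In the even case this is subsumed in the cited FFT, but in the odd case the paper must and does prove it, by a genuinely non-obvious argument: choose $\lambda_1,\dotsc,\lambda_n\in\C$ linearly independent over $\mathbb{Q}$ and let $A\in\fp(n)$ act by $v_i\mapsto\lambda_i v_i$, $v_i^{*}\mapsto-\lambda_i v_i^{*}$; a nonzero $\fg$-map $\C\to V^{\otimes s}$ would produce a zero-eigenvector of $A$ among pure tensors, which $\mathbb{Q}$-independence of the $\lambda_i$ forbids unless $s$ is even. In short: you correctly frame the theorem as a First Fundamental Theorem and perform the same preliminary reduction as the paper, but where the paper closes the argument by invoking the known FFTs of Lehrer--Zhang (or Ehrig--Stroppel) and of Moon, your proposal leaves the decisive spanning statement and the odd-degree vanishing unproven, and the sketched route through $\fg_{\0}$-invariants rests on misattributed thresholds, so it is a plan rather than a proof.
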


\begin{proof} In the case when the bilinear form is even we can use the maps given in \cref{E:JHommaps} to assume that $r=0$. The result is then the main theorem of \cite{LZ2} along with the discussion in Section~7.3 of \emph{loc. cit.}\  (since we are using the Lie superalgebra rather than the supergroup).  Alternatively, we could invoke \cite[Theorem~C]{stroppel}.

In the case when the bilinear form is odd, we first prove that 
\[
\Hom_{\fg}\left(V^{\otimes r}, V^{\otimes s} \right)=0
\] whenever $r+s$ is odd.   Using the maps given in \cref{E:JHommaps} we see it suffices to consider the case when $r=0$.  Let $\lambda_{1}, \dotsc , \lambda_{n}$ be elements of $\mathbb{C}$ which are linearly independent over $\mathbb{Q}$.   Using the homogeneous basis $v_{1}, \dotsc , v_{n}, v_{1}^{*}, \dotsc , v_{n}^{*}$ fixed in the proof of the previous theorem we define a linear map $A: V \to V$ by $v_{i} \mapsto \lambda_{i}v_{i}$ and $v_{i}^{*} \mapsto -\lambda_{i}v_{i}^{*}$ for all $i=1, \dotsc , n$.  Since this map preserves the bilinear form it defines an (even) element of $\fg$.  The existence of a nontrivial $\fg$-linear map from $V^{\otimes 0}=\mathbb{C}$ to $V^{\otimes s}$ implies that there exists a vector in $V^{\otimes s}$ on which $A$ acts as zero.  The pure tensors in our basis form a basis of eigenvectors for the action of $A$ on $V^{\otimes s}$ and the linear independence of $\lambda_{1}, \dotsc , \lambda_{n}$ implies that the eigenvalue zero cannot occur unless $s$ is even. 

Now that we may assume $r+s$ is even we can use the maps given in \cref{E:JHommaps} to further assume $r=s$.  Surjectivity then follows from \cite[Theorem 4.5]{Moon} since $B_{r,r}(0, -1)$ identifies with Moon's algebra $A_{r}$ and $F$ is exactly the map $\Psi$ defined in \emph{loc. cit}.  
\end{proof}

\subsection{}

Combining \cref{T:faithful} and \cref{T:full} we see that when $k=\mathbb{C}$ we have
\begin{equation}\label{E:algebraisom}
B_{r}(\delta, \varepsilon) \cong \End_{\fg}\left(V^{\otimes r} \right)
\end{equation}
whenever $m$ or $n$ is sufficiently large compared to $r$.  In this case, since the simple modules for $B_{r}(\delta, \varepsilon)$ are in bijection with its conjugacy classes of primitive idempotents and these, in turn, are in bijection with the indecomposable summands of $V^{\otimes r}$ as a $\fg$-module  by \cref{E:algebraisom}, we immediately have the following result. We also note that when the characteristic of $k$ is $p=0$, the condition of $p$-regularity is trivially satisfied by all partitions.   

\begin{theorem}\label{T:indecomposablesummands}  Let $k=\mathbb{C}$ and let $V$ be a superspace of dimension $m|n$. Let $b = (-,-)$ be a nondegenerate homogeneous bilinear form on $V$ with invariant Lie superalgebra $\fg$.  Let $\delta=m-n$ and $\varepsilon=(-1)^{\bar{b}}$.

Then whenever $m$ or $n$ are sufficiently large compared to $r$, the indecomposable summands of $V^{\otimes r}$ as a $\fg$-module are parameterized by the partitions given in \cref{T:simplesoftheMBA}.  Furthermore, for a given partition the number of times the corresponding indecomposable summand of $V^{\otimes r}$ occurs is equal to the dimension of the corresponding simple module for $B_{r}(\delta, \varepsilon)$.
\end{theorem}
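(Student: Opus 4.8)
The plan is to deduce the theorem from the algebra isomorphism in \cref{E:algebraisom}, the classification in \cref{T:simplesoftheMBA}, and the standard representation theory of endomorphism algebras. First I would invoke \cref{E:algebraisom}, valid under the hypothesis that $m$ or $n$ is large compared to $r$, to identify $B_{r}(\delta, \varepsilon)$ with the finite-dimensional $\C$-algebra $A := \End_{\fg}(V^{\otimes r})$. By \cref{T:simplesoftheMBA} the isomorphism classes of simple $A$-modules are parameterized by the partitions listed there, so the only remaining task is to transport this parameterization onto the indecomposable summands of $V^{\otimes r}$ and to identify the multiplicities.

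For the transport I would use two classical facts about a finite-dimensional algebra $A$ over an algebraically closed field. First, the isomorphism classes of simple $A$-modules are in bijection with the conjugacy classes of primitive idempotents of $A$. Second, when $A=\End_{\fg}(M)$ for a module $M$, any decomposition $1=\sum_{j}\epsilon_{j}$ of the identity into primitive orthogonal idempotents yields a decomposition $M=\bigoplus_{j}M\epsilon_{j}$ into indecomposable summands; conjugate idempotents give isomorphic summands, and Krull--Schmidt guarantees the summand types are well defined, so one obtains a bijection between conjugacy classes of primitive idempotents of $A$ and isomorphism classes of indecomposable summands of $M$. Composing these bijections matches simple $B_{r}(\delta,\varepsilon)$-modules with indecomposable summands of $V^{\otimes r}$, which is the asserted parameterization.

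The multiplicity statement then falls out of the Wedderburn structure. Writing $V^{\otimes r}\cong\bigoplus_{i}P_{i}^{\oplus a_{i}}$ with the $P_{i}$ pairwise nonisomorphic indecomposables, over $\C$ each $\End_{\fg}(P_{i})$ is local with residue field $\C$, so $A/\Rad(A)\cong\prod_{i}\operatorname{Mat}_{a_{i}}(\C)$ and the simple module $S_{i}$ attached to $P_{i}$ satisfies $\dim_{\C}S_{i}=a_{i}$; hence the multiplicity of each indecomposable summand equals the dimension of the corresponding simple module. The one point deserving genuine care, rather than a deep obstacle, is the $\Z_{2}$-graded (super) nature of $A=\End_{\fg}(V^{\otimes r})$: I would note that each $\epsilon_{j}$ commutes with the $\fg$-action and so $M\epsilon_{j}$ is an $\fg$-submodule, and that Krull--Schmidt together with the locality of endomorphism rings of indecomposables holds over the algebraically closed field $\C$, so the above correspondences go through. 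With this in hand the result is immediate from \cref{E:algebraisom} and \cref{T:simplesoftheMBA}.
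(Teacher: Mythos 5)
Your proposal is correct and follows essentially the same route as the paper: the paper likewise combines \cref{E:algebraisom} (obtained from \cref{T:faithful} and \cref{T:full}) with the bijections between simple $B_{r}(\delta,\varepsilon)$-modules, conjugacy classes of primitive idempotents, and indecomposable summands of $V^{\otimes r}$, then cites \cref{T:simplesoftheMBA}. Your Wedderburn computation of the multiplicities and your remark on the $\Z_{2}$-graded structure simply make explicit details the paper declares immediate.
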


When $r=2$ and $r=3$ Moon uses careful calculations using $A_{r}\cong B_{r}(0,-1)$ to obtain an explicit decomposition of $V^{\otimes r}$ into indecomposable summands \cite[Section 6]{Moon}.  It is not difficult to check that the above result agrees with Moon's calculations in these cases.

\linespread{1}

\end{document}